\DeclareMathOperator{\alf}{alph}
\DeclareMathOperator{\simple}{sim}
\DeclareMathOperator{\mul}{mul}
\DeclareMathOperator{\occ}{occ}
\DeclareMathOperator{\var}{var}
\newtheorem{theorem}{Theorem}[section]
\newtheorem{proposition}[theorem]{Proposition}
\newtheorem{lemma}[theorem]{Lemma}
\newtheorem{corollary}[theorem]{Corollary}
\theoremstyle{definition}
\numberwithin{equation}{section}
\renewcommand*\subjclass[2][2010]{\def\@subjclass{#2}\@ifundefined{subjclassname@#1}{\ClassWarning{\@classname}{Unknown edition (#1) of Mathematics Subject Classification; using '2010'.}}{\@xp\let\@xp\subjclassname\csname subjclassname@#1\endcsname}}
\renewcommand{\subjclassname}{\textup{2010} Mathematics Subject Classification}
\begin{document}

\title{Finiteness conditions for lattices of monoid varieties}
\thanks{Supported by the Ministry of Science and Higher Education of the Russian Federation (project FEUZ-2023-0022).}

\author{Sergey V. Gusev}

\address{Ural Federal University, Institute of Natural Sciences and Mathematics, Lenina 51, Ekaterinburg 620000, Russia}

\email{sergey.gusb@gmail.com}

\begin{abstract}
We classify all varieties of aperiodic monoids with central idempotents whose subvariety lattice is finite or satisfies the descending chain condition or satisfies the ascending chain condition. 
It turns out that for varieties in this class, the properties of having a finite subvariety lattice and a subvariety lattice satisfying the ascending chain condition are equivalent, and thus the property of having a subvariety lattice satisfying the ascending chain condition implies the one of having a subvariety lattice satisfying the descending chain condition.
\end{abstract}

\keywords{Monoid, variety, lattice, finiteness conditions.}

\subjclass{Primary 20M07, secondary 08B15}

\maketitle

\section{Background and overview}
\label{Sec: introduction}

In 1971, in his famous survey~\cite{Evans-71}, Trevor Evans posed the problem of classifying varieties of semigroups that are \textit{small} in the sense their lattice of subvarieties is finite. 
This problem, along with the problems of describing varieties whose lattice of subvarieties satisfies some other finiteness conditions, has attracted the attention of many authors. 
Among the finiteness conditions, the most attention have been received by the ascending chain condition and the descending chain condition.
For brevity, we will say that a variety $\mathbf V$ \textit{satisfies the ACC} [respectively, \textit{DCC}] if  its lattice $\mathfrak L(\mathbf V)$ of subvarieties satisfies the ascending chain condition [respectively, descending chain condition]. 
Nevertheless, despite significant progress, the problems of describing small semigroup varieties or semigroup varieties satisfying the ACC or the DCC still seem to be very far from a complete solution (see Lev Shevrin at al.~\cite[Section~10]{Shevrin-Vernikov-Volkov-09} for more details).

The present article is concerned with the finiteness conditions for lattices of varieties of \textit{monoids}, i.e., semigroups with an identity element.
Even though monoids are very similar to semigroups, the story turns out to be very different and also difficult.
Varieties of monoids whose subvariety lattice satisfies one of the three discussed finiteness conditions have not been systematically examined earlier, although a lot of non-trivial examples of such varieties have been known (see Section~7 in the recent survey~\cite{Gusev-Lee-Vernikov-22}).

The present paper is the first attempt at a systematic study of finiteness conditions for lattices of monoids varieties.
In view of the result by Pavel Kozhevnikov~\cite{Kozhevnikov-12}, there exist uncountably many group varieties whose subvariety lattice is isomorphic to just a 3-element chain.
Therefore, a classification of small monoid varieties (as well as monoid varieties satisfying the ACC or the DCC) is hardly possible in general, since such a classification must include a description of small varieties of periodic groups.
Thus, first of all, it is logical to focus specifically on the class of \textit{aperiodic} monoids, i.e., monoids that do not contain non-trivial subgroups.
However, experience suggests that even in the class of aperiodic monoids the discussed problems remain quite complex.
So, it is natural at first to try solving these problems within some subclass of the class of all aperiodic monoids. 

The class $\mathbf A_\mathsf{cen}$ of aperiodic monoids with central idempotents is a natural candidate.
This class is quite wide. 
It includes, in particular, all \textit{nilpotent} monoids, that is, monoids obtained from nilsemigroups by adjoining a new identity element.
The class $\mathbf A_\mathsf{cen}$ is not a variety, but it forms a so-called \textit{generalized variety}, i.e., a class of monoids closed under the formation of homomorphic images, subalgebras, finitary direct products and direct powers (see~\cite[Section~3.2]{Almeida-94}, for instance). 
Its ``finite'' part, i.e., all its finite members, form a pseudovariety of finite monoids, which was comprehensively studied by Howard Straubing~\cite{Straubing-82}. 
In particular, he showed that the pseudovariety of finite aperiodic moniods with central idempotents is the smallest pseudovariety containing the class of all finite nilpotent monoids as well as he described the variety of regular languages corresponding to this pseudovariety.

Subvarieties of $\mathbf A_\mathsf{cen}$ have been intensively studied for two last decades. 
This class is rich in examples of varieties interesting from specific points of view (see Daniel Glasson~\cite{Glasson-24a,Glasson-24b}, Sergey Gusev~\cite{Gusev-19,Gusev-24}, Sergey Gusev and Edmond Lee~\cite{Gusev-Lee-20}, Marcel Jackson~\cite{Jackson-05,Jackson-15}, Marcel Jackson and Edmond Lee~\cite{Jackson-Lee-18}, Marcel Jackson and Wen Ting Zhang~\cite{Jackson-Zhang-21}).
Besides that, one managed to completely describe subvarieties of $\mathbf A_\mathsf{cen}$ with some natural and important properties: hereditary finitely based varieties (Edmond Lee~\cite{Lee-12}), almost Cross varieties (Edmond Lee~\cite{Lee-13}), inherently non-finitely generated varieties (Edmond Lee~\cite{Lee-14}), and varieties with a distributive lattice of subvarieties (Sergey Gusev~\cite{Gusev-23}).
The main results of the present paper naturally fit into this series of results.

We completely classify small subvarieties of $\mathbf A_\mathsf{cen}$, subvarieties of $\mathbf A_\mathsf{cen}$ satisfying the ACC, and subvarieties of $\mathbf A_\mathsf{cen}$ satisfying the DCC.
We provide equational descriptions of such varieties. 
Namely, for each of the three discussed finiteness conditions, we present four countably infinite series of varieties such that every subvariety of $\mathbf A_\mathsf{cen}$ satisfying the given finiteness condition is contained in one of the varieties in these series. 
In particular, it turns out that for subvarieties of $\mathbf A_\mathsf{cen}$, the properties of being small and satisfying the ACC are equivalent.
As a corollary, any subvariety $\mathbf A_\mathsf{cen}$ satisfying the ACC satisfies also the DCC.

This paper is structured as follows.
In Section~\ref{sec: main result}, we formulate and discuss our main result.
Some background results are first given in Section~\ref{Sec: preliminaries}.   
In Section~\ref{sec: auxiliary}, we formulate and prove several auxiliary facts.
Finally, Section~\ref{sec: proof} is devoted to the proof of our main results.

\section{Our main result}
\label{sec: main result}

Let us briefly recall a few notions that we need to formulate our main result.
Let $\mathcal X$ be a countably infinite set called an \textit{alphabet}. 
As usual, let~$\mathcal X^\ast$ denote the free monoid over the alphabet~$\mathcal X$. 
Elements of~$\mathcal X$ are called \textit{letters} and elements of~$\mathcal X^\ast$ are called \textit{words}.
We treat the identity element of~$\mathcal X^\ast$ as \textit{the empty word}, which is denoted by~$1$.  
Words and letters are denoted by small Latin letters. 
However, words unlike letters are written in bold. 
An identity is written as $\mathbf u \approx \mathbf v$, where $\mathbf u,\mathbf v \in \mathcal X^\ast$; it is \textit{non-trivial} if $\mathbf u \ne \mathbf v$.

As usual, $\mathbb N$ denote the set of all natural numbers. 
Let $\mathbb N_0:=\mathbb N\cup\{0\}$. 
For any $n\in\mathbb N$, we denote by $S_n$ the full symmetric group on the set $\{1,\dots,n\}$.  
For convenience, we put $S_0:=S_1$. 
For any $n,m,k\in\mathbb N_0$, $\rho\in S_{n+m}$ and $\tau\in S_{n+m+k}$, we define the words:
\begin{align*}
\mathbf a_{n,m}[\rho]&:=\biggl(\prod_{i=1}^n z_it_i\biggr)x\biggl(\prod_{i=1}^{n+m} z_{i\rho}\biggr)x\biggl(\prod_{i=n+1}^{n+m} t_iz_i\biggr),\\
\mathbf a_{n,m}^\prime[\rho]&:=\biggl(\prod_{i=1}^n z_it_i\biggr)\biggl(\prod_{i=1}^{n+m} z_{i\rho}\biggr)x^2\biggl(\prod_{i=n+1}^{n+m} t_iz_i\biggr),
\end{align*}
\begin{align*}
\mathbf a_{n,m}^{\prime\prime}[\rho]&:=\biggl(\prod_{i=1}^n z_it_i\biggr)x^2\biggl(\prod_{i=1}^{n+m} z_{i\rho}\biggr)\biggl(\prod_{i=n+1}^{n+m} t_iz_i\biggr),\\
\mathbf c_{n,m,k}[\tau]&:=\biggl(\prod_{i=1}^n z_it_i\biggr)xyt\biggl(\prod_{i=n+1}^{n+m} z_it_i\biggr)x\biggl(\prod_{i=1}^{n+m+k} z_{i\tau}\biggr)y\biggl(\prod_{i=n+m+1}^{n+m+k} t_iz_i\biggr).
\end{align*}
Let $\mathbf c_{n,m,k}^\prime[\tau]$ denote the word obtained from $\mathbf c_{n,m,k}[\tau]$ by swapping the first occurrences of $x$ and $y$.
We denote also by $\mathbf d_{n,m,k}[\tau]$ and $\mathbf d_{n,m,k}^\prime[\tau]$ the words obtained from the words $\mathbf c_{n,m,k}[\tau]$ and $\mathbf c_{n,m,k}^\prime[\tau]$, respectively, when reading the last words from right to left.
We fix notation for the following three identities:
\begin{align*}
\sigma_1:\enskip xyzxty\approx yxzxty,\ \ \ \ 
\sigma_2:\enskip xzytxy\approx xzytyx,\ \ \ \ 
\sigma_3:\enskip xzxyty\approx xzyxty.
\end{align*}
For any $n\in\mathbb N$, we define:
\[
\begin{aligned}
\omega_n&:\enskip x\biggl(\prod_{i=1}^nt_ix\biggr)\approx x^{n+1}\biggl(\prod_{i=1}^nt_i\biggr).
\end{aligned}
\]
Let
\[
\begin{aligned}
&\Phi_n:=\{x^n\approx x^{n+1},\,x^ny\approx yx^n\},\ n\in\mathbb N,\\
&\Psi_1:=\left\{
\mathbf a_{k,\ell}[\rho] \approx \mathbf a_{k,\ell}^\prime[\rho]
\mid
k,\ell\in\mathbb N,\,
\rho\in S_{k+\ell}
\right\},\\
&\Psi_2:=\left\{
\mathbf a_{k,\ell}[\rho] \approx \mathbf a_{k,\ell}^{\prime\prime}[\rho]
\mid
k,\ell\in\mathbb N,\,
\rho\in S_{k+\ell}
\right\},\\
&\Psi_3:=\left\{
\mathbf c_{k,\ell,m}[\rho]\approx\mathbf c_{k,\ell,m}^\prime[\rho],\,\mathbf d_{k,\ell,m}[\rho]\approx\mathbf d_{k,\ell,m}^\prime[\rho]
\mid
k,\ell,m\in\mathbb N,\,
\rho\in S_{k+\ell+m}
\right\}
.
\end{aligned}
\]
Let $\var\,\Sigma$ denote the monoid variety given by a set $\Sigma$ of identities. 
For brevity, we denote by $\mathbf V\Sigma$ the subvariety of a variety $\mathbf V$ defined by a set $\Sigma$ of identities.
For any $n \in \mathbb N$, put 
\[
\mathbf P_n:=\var\left\{\Phi_n,\,\Psi_1,\,\Psi_3\right\},\ 
\mathbf Q_n:=\var\left\{\Phi_n,\sigma_2,\sigma_3\right\},\ 
\mathbf R_n:=\mathbf P_n\{\omega_n\}
\ 
\text{ and } \  
\mathbf S_n:=\mathbf Q_n\{\omega_n\}.
\]
If $\mathbf V$ is a monoid variety, then $\mathbf V^\delta$ denotes the variety \textit{dual} to $\mathbf V$, i.e., the variety consisting of monoids anti-isomorphic to monoids from $\mathbf V$. Notice that 
\[
\mathbf P_n^\delta=\var\left\{\Phi_n,\,\Psi_2,\,\Psi_3\right\},\ 
\mathbf Q_n^\delta=\var\left\{\Phi_n,\sigma_1,\sigma_3\right\},\ 
\mathbf R_n^\delta=\mathbf P_n^\delta\{\omega_n\}
\ 
\text{ and } \  
\mathbf S_n^\delta=\mathbf Q_n^\delta\{\omega_n\}.
\]

Our first main result is the following

\begin{theorem}
\label{T: small=ACC}
For a subvariety $\mathbf V$ of $\mathbf A_\mathsf{cen}$ the following are equivalent:
\begin{itemize}
\item[\textup{(i)}] $\mathbf V$ is small;
\item[\textup{(ii)}] $\mathbf V$ satisfies the ACC;
\item[\textup{(iii)}] $\mathbf V$ is contained in  one of the varieties $\mathbf R_n$, $\mathbf R_n^\delta$, $\mathbf S_n$ or $\mathbf S_n^\delta$ for some $n\in\mathbb N$.
\end{itemize}
\end{theorem}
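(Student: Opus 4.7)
The plan is to establish the cycle of implications (i) $\Rightarrow$ (ii) $\Rightarrow$ (iii) $\Rightarrow$ (i). The first implication is immediate since every finite lattice trivially satisfies the ACC, so the substantive content lies in the other two directions; I would also use duality throughout so that the $\delta$-cases come for free.

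For (iii) $\Rightarrow$ (i), I would prove directly that each of $\mathbf R_n$ and $\mathbf S_n$ has a finite lattice of subvarieties. The strategy is to show that, modulo the defining identities of $\mathbf R_n$ (or $\mathbf S_n$), every word in $\mathcal X^\ast$ can be reduced to one of finitely many canonical forms: the identities $\Phi_n$ bound the exponents and make high powers central, the identity $\omega_n$ collapses interleaved occurrences of a repeated letter into a block, and the identities $\Psi_1,\Psi_3$ (respectively $\sigma_2,\sigma_3$) govern the permitted reorderings of letters based on whether they have multiple occurrences. If only finitely many words are pairwise non-equivalent modulo the variety, each subvariety is pinned down by which identifications of these canonical forms it enforces, and so $\mathfrak L(\mathbf R_n)$ and $\mathfrak L(\mathbf S_n)$ are finite. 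To make this rigorous one must isolate a finite list of ``atomic'' candidate identities, prove every subvariety is definable by a subset of them, and bound the number of such subsets.

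For (ii) $\Rightarrow$ (iii) I would argue by contrapositive: assuming $\mathbf V \subseteq \mathbf A_\mathsf{cen}$ is not contained in any $\mathbf R_n$, $\mathbf R_n^\delta$, $\mathbf S_n$ or $\mathbf S_n^\delta$, I construct an infinite strictly ascending chain in $\mathfrak L(\mathbf V)$. Since $\mathbf V \subseteq \mathbf A_\mathsf{cen}$ forces some $\Phi_n$ to hold in $\mathbf V$, the failure must come from one of the non-$\Phi$ families, and the four series correspond to a dichotomy about how the ``$\Psi$ vs.\ $\sigma$'' axioms fail combined with the possible failure of the ``compressor'' identity $\omega_n$. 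For each mode of escape one exhibits a family of finite monoids $\{M_k\}_{k\in\mathbb N}$ built from the witnesses to the failed identities, typically modeled on the words $\mathbf a_{k,\ell}[\rho]$ or $\mathbf c_{k,\ell,m}[\tau]$ whose length grows with $k$, so that $\var(M_1)\subsetneq \var(M_2)\subsetneq \cdots \subseteq \mathbf V$.

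The main obstacle will be (iii) $\Rightarrow$ (i). Even showing that a single one of these varieties is small requires complete control over the derivational consequences of infinite families of axioms $\Psi_1,\Psi_3$ (or $\sigma_2,\sigma_3$) parametrized by arbitrary permutations. One must prove that, modulo $\Phi_n$ and $\omega_n$, these infinitely many identities collapse to finitely many essentially distinct ones and that the ordering of ``multiple-occurrence'' letters in a word is completely controlled by a finite combinatorial invariant. Substantial use of the groundwork from Section~\ref{Sec: preliminaries} and the auxiliary results from Section~\ref{sec: auxiliary} should be expected here, and the permutation indexing in the $\mathbf a$-, $\mathbf c$- and $\mathbf d$-words indicates that the combinatorics of canonical forms will be the technically heaviest part of the argument.
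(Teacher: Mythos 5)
Your plan is correct and follows essentially the same route as the paper: (i)$\Rightarrow$(ii) trivially, (ii)$\Rightarrow$(iii) by exhibiting strictly ascending chains of subvarieties $\mathbf M(\mathbf a_k)$, $\mathbf M(\mathbf c_k)$ built on the words $\mathbf a_{n,m}[\rho]$, $\mathbf c_{n,m,k}[\tau]$ whenever $\mathbf V$ escapes the four series (this is Proposition~\ref{P: non-small} feeding into Lemma~\ref{L: ACC or DCC}, with the failure of $\omega_n$ handled by the chain $\mathbf M(xt_1x\cdots t_nx)$), and (iii)$\Rightarrow$(i) by reducing every subvariety to a subset of a finite list of candidate identities (Proposition~\ref{P: Psi_1,Psi_3 subvarieties} together with the exponent bounds coming from $\Phi_n$ and $\omega_n$). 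The only cosmetic differences are that the paper factors (ii)$\Rightarrow$(iii) through the DCC-shared Lemma~\ref{L: ACC or DCC} and simply cites known results of Lee for the smallness of $\mathbf S_n$ rather than re-deriving it.
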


Our second main result is the following

\begin{theorem}
\label{T: DCC}
A subvariety of $\mathbf A_\mathsf{cen}$ satisfies the DCC if and only if it is contained in one of the varieties $\mathbf P_n$, $\mathbf P_n^\delta$, $\mathbf Q_n$ or $\mathbf Q_n^\delta$ for some $n\in\mathbb N$.
\end{theorem}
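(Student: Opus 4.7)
The plan is to prove the two implications separately, exploiting the auxiliary results of Section~\ref{sec: auxiliary} and closely paralleling the proof of Theorem~\ref{T: small=ACC}.

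\emph{Sufficiency.} I need to verify that each of $\mathbf P_n, \mathbf P_n^\delta, \mathbf Q_n, \mathbf Q_n^\delta$ has a subvariety lattice satisfying the DCC. Since $\mathfrak L(\mathbf V)$ is anti-isomorphic to $\mathfrak L(\mathbf V^\delta)$, it suffices to handle $\mathbf P_n$ and $\mathbf Q_n$. My plan is to establish, via the results of Section~\ref{sec: auxiliary}, a combinatorial normal form for words modulo $\Phi_n \cup \Psi_1 \cup \Psi_3$ (respectively $\Phi_n \cup \{\sigma_2, \sigma_3\}$), so that each subvariety of $\mathbf P_n$ (respectively $\mathbf Q_n$) corresponds to a downward-closed subset of a combinatorial poset which admits no infinite descending chain. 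By comparison with the companion varieties $\mathbf R_n$ and $\mathbf S_n$ of Theorem~\ref{T: small=ACC}, the identity $\omega_n$ there was responsible precisely for ruling out infinite antichains (and thereby forcing ACC); its omission in $\mathbf P_n, \mathbf Q_n$ permits antichains while preserving downward well-foundedness, which is exactly what DCC requires.

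\emph{Necessity.} I argue by contrapositive: assume $\mathbf V \subseteq \mathbf A_\mathsf{cen}$ is not contained in any $\mathbf P_n, \mathbf P_n^\delta, \mathbf Q_n$, or $\mathbf Q_n^\delta$, and construct an infinite strictly descending chain in $\mathfrak L(\mathbf V)$. The hypothesis splits into two scenarios: either (a) $\mathbf V$ fails $\Phi_n$ for every $n$ (no uniform nilpotency bound), or (b) $\mathbf V$ satisfies $\Phi_{n_0}$ for some least $n_0$, in which case the omission forces $\mathbf V$ to violate at least one identity from each of the four sets $\{\Psi_1,\Psi_3\}$, $\{\Psi_2,\Psi_3\}$, $\{\sigma_2,\sigma_3\}$, $\{\sigma_1,\sigma_3\}$. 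In scenario~(a), the absence of uniform nilpotency should be exploited to assemble a descending chain via successive truncations of $\mathbf V$ by strengthening fragments of $\Phi_n$-type identities. In scenario~(b), a case analysis guided by which of the word-families $\mathbf a_{k,\ell}[\rho], \mathbf c_{k,\ell,m}[\tau], \mathbf d_{k,\ell,m}[\tau]$ witnesses the failure yields explicit descending chains by successively strengthening the set of forbidden permutations or enlarging the index parameters $(k,\ell,m)$.

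The main obstacle I anticipate is scenario~(b) of the necessity direction, particularly the configurations where $\mathbf V$ simultaneously violates identities from several of the four sets and the descending chain must be genuinely witnessed inside $\mathbf V$ (not merely inside $\mathbf A_\mathsf{cen}$). This will require explicit separating monoids whose identity sets interact delicately with the multi-parameter word-families; the template provided by Theorem~\ref{T: small=ACC} together with the technical lemmas of Section~\ref{sec: auxiliary} should supply both the constructions and the verification machinery.
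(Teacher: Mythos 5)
Your proposal is a plan rather than a proof, and both halves have genuine gaps. For \emph{sufficiency}, the guiding principle you state is wrong: if subvarieties of $\mathbf P_n$ are encoded as downward-closed subsets of a poset of identities/words, then the DCC for $\mathfrak L(\mathbf P_n)$ is equivalent to that poset being a well-quasi-order, i.e.\ you need the absence of infinite \emph{antichains} as well as downward well-foundedness; your remark that omitting $\omega_n$ ``permits antichains while preserving downward well-foundedness, which is exactly what DCC requires'' would, under the natural correspondence, actually destroy the DCC. The paper's mechanism is different and is the standard one: the DCC for $\mathfrak L(\mathbf V)$ follows once every subvariety is finitely based \emph{within} $\mathbf V$. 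For $\mathbf Q_n$ this is quoted from Lee's hereditary finite basis theorem; for $\mathbf P_n$ it is derived by combining the normal form of Proposition~\ref{P: Psi_1,Psi_3 subvarieties} (identities of types~\eqref{one letter in a block} and~\eqref{two letters in a block}) with Volkov's general finite basis condition, which supplies exactly the wqo-type input your sketch is missing. Nothing in your outline substitutes for this.

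For \emph{necessity}, your scenario~(a) is vacuous: every subvariety of the generalized variety $\mathbf A_\mathsf{cen}$ already satisfies $\Phi_n$ for some $n$ (Straubing; \cite[Theorem~7.2.5]{Almeida-94}), so no chain construction is needed there. In scenario~(b) you correctly guess that the word families $\mathbf a_{k,\ell}[\rho]$, $\mathbf c_{k,\ell,m}[\tau]$, $\mathbf d_{k,\ell,m}[\tau]$ are the witnesses, but you omit the essential converse step: the descending (and ascending) chains live inside the varieties $\mathbf M(\mathbf a_{n,m}[\rho])$, $\mathbf M(\mathbf c_{n,m,0}[\rho])$, $\mathbf M(\mathbf c_{n,m,n+m+1}[\rho])$ themselves (Proposition~\ref{P: non-small}), so the DCC forces these monoids \emph{out} of $\mathbf V$, and one must then deduce from these exclusions (together with a case split on whether $M(xyx)\in\mathbf V$ and whether $\mathbf N$ or $\mathbf N^\delta$ lies in $\mathbf V$) that $\mathbf V$ actually \emph{satisfies} $\Psi_1$ or $\Psi_2$, $\Psi_3$, or $\sigma_2,\sigma_3$ — this is the content of Lemma~\ref{L: swapping in linear-balanced}, Corollary~\ref{C: satisfies a_{n+m}[rho]=a_n^{n+m}[rho]} and the cited Lemma~4.9 of \cite{Gusev-23}, and it is the hardest part of the argument. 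Your sketch of ``successively strengthening the set of forbidden permutations'' does not address it.
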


Theorems~\ref{T: small=ACC} and~\ref{T: DCC} directly imply the following 

\begin{corollary}
\label{C: ACC=>DCC}
If a subvariety of $\mathbf A_\mathsf{cen}$ satisfies the ACC, then it also satisfies the DCC.\qed
\end{corollary}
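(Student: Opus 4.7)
The plan is to derive the corollary as an immediate consequence of Theorems~\ref{T: small=ACC} and~\ref{T: DCC}, using the observation that the varieties appearing in the ACC classification are actually subvarieties of those appearing in the DCC classification. Concretely, by definition $\mathbf R_n = \mathbf P_n\{\omega_n\}$ and $\mathbf S_n = \mathbf Q_n\{\omega_n\}$ are the subvarieties of $\mathbf P_n$ and $\mathbf Q_n$ obtained by adjoining the identity $\omega_n$, and the analogous inclusions $\mathbf R_n^\delta \subseteq \mathbf P_n^\delta$ and $\mathbf S_n^\delta \subseteq \mathbf Q_n^\delta$ hold for the duals.

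So, given a subvariety $\mathbf V$ of $\mathbf A_\mathsf{cen}$ satisfying the ACC, the implication (ii)$\Rightarrow$(iii) of Theorem~\ref{T: small=ACC} places $\mathbf V$ inside one of $\mathbf R_n$, $\mathbf R_n^\delta$, $\mathbf S_n$, or $\mathbf S_n^\delta$ for some $n \in \mathbb N$. Composing with the appropriate containment from the previous paragraph yields that $\mathbf V$ sits inside one of $\mathbf P_n$, $\mathbf P_n^\delta$, $\mathbf Q_n$, or $\mathbf Q_n^\delta$, and the ``if'' direction of Theorem~\ref{T: DCC} then ensures that $\mathbf V$ satisfies the DCC.

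Because the argument reduces to a routine comparison of the two classification lists, there is no real obstacle at the level of the corollary itself; the entire substance of the result is carried by the proofs of Theorems~\ref{T: small=ACC} and~\ref{T: DCC}, and in particular by the non-trivial fact that the four series $\mathbf R_n$, $\mathbf R_n^\delta$, $\mathbf S_n$, $\mathbf S_n^\delta$ already exhaust the ACC-subvarieties of $\mathbf A_\mathsf{cen}$.
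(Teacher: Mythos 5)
Your argument is exactly the one the paper intends: since $\mathbf R_n=\mathbf P_n\{\omega_n\}\subseteq\mathbf P_n$ and $\mathbf S_n=\mathbf Q_n\{\omega_n\}\subseteq\mathbf Q_n$ (and dually), the implication (ii)$\Rightarrow$(iii) of Theorem~\ref{T: small=ACC} followed by the sufficiency part of Theorem~\ref{T: DCC} gives the corollary. The paper treats this as immediate (hence the \qed with no written proof), and your write-up correctly fills in the same routine comparison.
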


Notice that there is still no known example of a monoid variety that satisfies the ACC but violates the DCC (see~\cite[Question~7.8]{Gusev-Lee-Vernikov-22}).
In the context of semigroup varieties, such an example was provided by Mark Sapir~\cite{Sapir-91}. 
To the best of our knowledge, the article~\cite{Sapir-91} is the only work that contains examples of semigroup varieties of this kind.

\section{Preliminaries}
\label{Sec: preliminaries}

\subsection{Words, identities, deduction}

The \textit{alphabet} of a word $\mathbf w$, i.e., the set of all letters occurring in $\mathbf w$, is denoted by $\alf(\mathbf w)$. 
For a word $\mathbf w$ and a letter $x$, let $\occ_x(\mathbf w)$ denote the number of occurrences of $x$ in $\mathbf w$.
A letter $x$ is called \textit{simple} [\textit{multiple}] \textit{in a word} $\mathbf w$ if $\occ_x(\mathbf w)=1$ [respectively, $\occ_x(\mathbf w)>1$]. 
The set of all simple [multiple] letters in a word $\mathbf w$ is denoted by $\simple(\mathbf w)$ [respectively, $\mul(\mathbf w)$]. 
If $\mathbf w$ is a word and $\mathcal Z\subseteq\alf(\mathbf w)$, then we denote by $\mathbf w_{\mathcal Z}$ [respectively, $\mathbf w(\mathcal Z)$] the word obtained from $\mathbf w$ by removing all occurrences of letters from $\mathcal Z$ [respectively, $\alf(\mathbf w)\setminus \mathcal Z$]. 
If $\mathcal Z=\{z\}$, then we write $\mathbf w_z$ rather than $\mathbf w_{\{z\}}$.
The expression $_{i\mathbf w}x$ means the $i$th occurrence of a letter $x$ in a word $\mathbf w$. 
If the $i$th occurrence of $x$ precedes the $j$th occurrence of $y$ in a word $\mathbf w$, then we write $({_{i\mathbf w}x}) < ({_{j\mathbf w}y})$.

A variety $\mathbf V$ \textit{satisfies} an identity $\mathbf u \approx \mathbf v$, if for any monoid $M\in \mathbf V$ and any substitution $\varphi\colon \mathcal X \to M$, the equality $\varphi(\mathbf u)=\varphi(\mathbf v)$ holds in $M$.
An identity $\mathbf u \approx \mathbf v$ is \textit{directly deducible} from an identity $\mathbf s \approx \mathbf t$ if there exist some words $\mathbf a,\mathbf b \in \mathcal X^\ast$ and substitution $\varphi\colon \mathcal X \to \mathcal X^\ast$ such that $\{ \mathbf u, \mathbf v \} = \{ \mathbf a\varphi(\mathbf s)\mathbf b,\mathbf a\varphi(\mathbf t)\mathbf b \}$.
A non-trivial identity $\mathbf u \approx \mathbf v$ is \textit{deducible} from a set $\Sigma$ of identities if there exists some finite sequence $\mathbf u = \mathbf w_0, \dots, \mathbf w_m = \mathbf v$ of words such that each identity $\mathbf w_i \approx \mathbf w_{i+1}$ is directly deducible from some identity in $\Sigma$.

\begin{proposition}[Birkhoff's Completeness Theorem for Equational Logic; see {\cite[Theorem~1.4.6]{Almeida-94}}]
\label{P: deduction}
A variety $\var\,\Sigma$ satisfies an identity $\mathbf u \approx \mathbf v$ if and only if $\mathbf u \approx \mathbf v$ is deducible from $\Sigma$.\qed
\end{proposition}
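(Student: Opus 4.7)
The plan is to prove the two implications separately, along classical lines. The ``soundness'' direction, that deducibility from $\Sigma$ implies satisfaction by $\var\Sigma$, is routine; the ``completeness'' direction, that satisfaction by every monoid in $\var\Sigma$ forces a syntactic deduction, is the substantive part and would be achieved by exhibiting an explicit relatively free monoid of $\var\Sigma$ on the alphabet $\mathcal{X}$.

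For soundness, I would induct on the length of a deduction. The inductive step reduces to the following: if $M \in \var\Sigma$ and $\mathbf{w} \approx \mathbf{w}'$ is directly deducible from an identity $\mathbf{s} \approx \mathbf{t}$ of $\Sigma$ via witnesses $(\mathbf{a}, \mathbf{b}, \varphi)$, then $M$ satisfies $\mathbf{w} \approx \mathbf{w}'$. For any $\psi\colon \mathcal{X} \to M$, the composite $\psi \circ \varphi$ is a substitution into $M$; since $M$ satisfies $\mathbf{s} \approx \mathbf{t}$ we get $\psi(\varphi(\mathbf{s})) = \psi(\varphi(\mathbf{t}))$, and multiplying on either side by $\psi(\mathbf{a})$ and $\psi(\mathbf{b})$ yields $\psi(\mathbf{w}) = \psi(\mathbf{w}')$.

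For completeness, define a binary relation $\sim_\Sigma$ on $\mathcal{X}^\ast$ by declaring $\mathbf{s} \sim_\Sigma \mathbf{t}$ iff $\mathbf{s} = \mathbf{t}$ or $\mathbf{s} \approx \mathbf{t}$ is deducible from $\Sigma$. The claim is that $\sim_\Sigma$ is a fully invariant congruence on $\mathcal{X}^\ast$, so the quotient $F_\Sigma := \mathcal{X}^\ast / {\sim_\Sigma}$ is a well-defined monoid. For any $\mathbf{s} \approx \mathbf{t} \in \Sigma$ and any substitution $\varphi$ into $\mathcal{X}^\ast$, the identity $\varphi(\mathbf{s}) \approx \varphi(\mathbf{t})$ is directly deducible from $\mathbf{s} \approx \mathbf{t}$ via the triple $(1, 1, \varphi)$, so $\varphi(\mathbf{s}) \sim_\Sigma \varphi(\mathbf{t})$; hence $F_\Sigma$ satisfies every identity of $\Sigma$, i.e.\ $F_\Sigma \in \var\Sigma$. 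Now if $\var\Sigma$ satisfies $\mathbf{u} \approx \mathbf{v}$, applying this to $F_\Sigma$ with the canonical substitution $x \mapsto [x]_{\sim_\Sigma}$ yields $[\mathbf{u}]_{\sim_\Sigma} = [\mathbf{v}]_{\sim_\Sigma}$, which is exactly the desired deducibility.

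The main obstacle is the verification that $\sim_\Sigma$ is a fully invariant congruence. Closure under left and right concatenation is immediate: a direct deduction with witnesses $(\mathbf{a}, \mathbf{b}, \varphi)$ extends to one with $(\mathbf{c}\mathbf{a}, \mathbf{b}\mathbf{d}, \varphi)$. Closure under substitutions $\chi\colon \mathcal{X} \to \mathcal{X}^\ast$ is the decisive observation: a direct deduction of $\mathbf{w} \approx \mathbf{w}'$ from $\mathbf{s} \approx \mathbf{t}$ with witnesses $(\mathbf{a}, \mathbf{b}, \varphi)$ induces a direct deduction of $\chi(\mathbf{w}) \approx \chi(\mathbf{w}')$ from the same $\mathbf{s} \approx \mathbf{t}$ with witnesses $(\chi(\mathbf{a}), \chi(\mathbf{b}), \chi \circ \varphi)$, by functoriality of substitution. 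These structural closure properties are precisely what makes the syntactic notion of deducibility strong enough to capture every semantic consequence of $\Sigma$, and they underpin the fact that $F_\Sigma$ is the free monoid in $\var\Sigma$.
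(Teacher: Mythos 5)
The paper does not prove this statement at all: it is quoted as a known result (Birkhoff's Completeness Theorem) with a reference to Almeida's book, so there is no in-paper argument to compare against. Your proof is the standard one — soundness by induction on the length of a deduction, completeness via the quotient of $\mathcal X^\ast$ by the fully invariant congruence generated by $\Sigma$ — and it is correct as written.
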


\subsection{Rees quotient monoids}
\label{Subsec: Rees quotient monoids}

The following construction was used by Perkins~\cite{Perkins-69} to build the first example of a finite semigroup generating non-finitely based variety.
For any set $\mathcal W$ of words, let $M(\mathcal W)$ denote the Rees quotient monoid of $\mathcal X^\ast$ over the ideal of all words that are not subwords of any word in $\mathcal W$.

Given a variety $\mathbf V$, a word $\mathbf u$ is called an \emph{isoterm for} $\mathbf V$ if the only word $\mathbf v$ such that $\mathbf V$ satisfies the identity $\mathbf u \approx \mathbf v$ is the word $\mathbf u$ itself.

\begin{lemma}[\mdseries{\!\cite[Lemma~3.3]{Jackson-05}}]
\label{L: M(W) in V}
Let $\mathbf V$ be a monoid variety and $\mathcal W$ a set of words. 
Then $M(\mathcal W)\in\mathbf V$ if and only if each word in $\mathcal W$ is an isoterm for $\mathbf V$.\qed
\end{lemma}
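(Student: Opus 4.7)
The plan is to prove both implications by exploiting the concrete structure of $M(\mathcal W)$: its nonzero elements are exactly the words in $\mathcal X^\ast$ that occur as a factor of some member of $\mathcal W$, every other word maps to the zero of $M(\mathcal W)$, and multiplication is inherited from concatenation in $\mathcal X^\ast$ modulo this zero.

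For the forward direction I would fix $\mathbf u\in\mathcal W$ and an arbitrary identity $\mathbf u\approx\mathbf v$ satisfied by $\mathbf V$; since $M(\mathcal W)\in\mathbf V$, this identity must also hold in $M(\mathcal W)$. First, I would evaluate it under the substitution sending every letter of $\alf(\mathbf u)$ to itself and every other letter to a fixed ``garbage'' word that is not a factor of any member of $\mathcal W$ (such a word exists because $\mathcal X$ is countably infinite, or failing that, by taking a sufficiently long power of a letter). The left side evaluates to the nonzero element $\mathbf u$, so the right side must also be nonzero, which forces $\alf(\mathbf v)\subseteq\alf(\mathbf u)$. Applying next the identity substitution $x\mapsto x$ for $x\in\alf(\mathbf u)$, both sides become honest words, and their coincidence in $M(\mathcal W)$ with the nonzero element $\mathbf u$ on the left can only happen if $\mathbf v=\mathbf u$ in $\mathcal X^\ast$, as desired.

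For the reverse direction I would first record the useful observation that any factor of an isoterm is again an isoterm: if $\mathbf w\in\mathcal W$ factorises as $\mathbf a\mathbf u\mathbf b$ and $\mathbf V\models \mathbf u\approx\mathbf v$, then $\mathbf V\models \mathbf w\approx \mathbf a\mathbf v\mathbf b$, and the isoterm property of $\mathbf w$ forces $\mathbf v=\mathbf u$. Consequently, every nonzero element of $M(\mathcal W)$ is an isoterm for $\mathbf V$. To verify that $M(\mathcal W)$ satisfies an arbitrary identity $\mathbf u\approx\mathbf v$ of $\mathbf V$, I would take a substitution $\varphi\colon\mathcal X\to M(\mathcal W)$, lift it to a substitution $\tilde\varphi\colon\mathcal X^\ast\to\mathcal X^\ast$ by choosing a word representative for every nonzero value and a fixed garbage word for every zero value, and use Proposition~\ref{P: deduction} to conclude that $\mathbf V\models\tilde\varphi(\mathbf u)\approx\tilde\varphi(\mathbf v)$. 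If $\varphi(\mathbf u)\neq 0$, then $\tilde\varphi(\mathbf u)$ is a factor of some $\mathbf w\in\mathcal W$, hence itself an isoterm for $\mathbf V$, so $\tilde\varphi(\mathbf v)=\tilde\varphi(\mathbf u)$ as words and therefore $\varphi(\mathbf v)=\varphi(\mathbf u)$ in $M(\mathcal W)$; the case $\varphi(\mathbf v)\neq 0$ is symmetric, and if both sides are zero there is nothing to prove.

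The main delicate point is the bookkeeping of the zero element: the garbage word must be chosen so that any zero input to $\varphi$ is guaranteed to propagate to a forbidden factor inside $\tilde\varphi(\mathbf u)$ or $\tilde\varphi(\mathbf v)$, and one must separately justify the ``factor of an isoterm is an isoterm'' step so that the reverse direction can appeal to the full isoterm property of every nonzero element of $M(\mathcal W)$, not only the words of $\mathcal W$ themselves. Once those two points are in place, the rest is a routine case analysis.
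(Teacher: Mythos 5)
Your proof is correct and is essentially the standard argument for this result, which the paper does not prove itself but cites from Jackson's work: the evaluation substitution (letters of $\alf(\mathbf u)$ to themselves, all other letters to $0$) for necessity, and the factor-of-an-isoterm observation together with lifting substitutions through $\mathcal X^\ast$ for sufficiency, is exactly how the cited lemma is established. The only tacit assumption is that a ``garbage'' word outside the set of factors of members of $\mathcal W$ exists, i.e.\ that the Rees ideal is nonempty; this holds in every application in the paper, and the degenerate case (where $M(\mathcal W)$ is the free monoid and every word is an isoterm) is trivial anyway.
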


Given a set $\mathcal W$ of words, let $\mathbf M(\mathcal W)$ denote the monoid variety generated by $M(\mathcal W)$.
For brevity, if $\mathbf w_1,\dots,\mathbf w_k\in\mathcal X^\ast$, then we write $M(\mathbf w_1,\dots,\mathbf w_k)$ [respectively, $\mathbf M(\mathbf w_1,\dots,\mathbf w_k)$] rather than $M(\{\mathbf w_1,\dots,\mathbf w_k\})$ [respectively, $\mathbf M(\{\mathbf w_1,\dots,\mathbf w_k\})$].

\subsection{Some known results}

The following fact is obvious.

\begin{lemma}
\label{L: x^n is an isoterm}
Let $\mathbf V$ be a monoid variety and $n\in\mathbb N$. 
Then $x^n$ is not an isoterm for $\mathbf V$ if and only if $\mathbf V$ satisfies the identity $x^n\approx x^m$ for some $m>n$.\qed
\end{lemma}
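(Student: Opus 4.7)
My plan is to establish both implications separately. The backward direction is immediate from the definition of isoterm: if $\mathbf V$ satisfies $x^n\approx x^m$ for some $m>n$, then the word $\mathbf v=x^m$ is distinct from $x^n$ yet $\mathbf V\models x^n\approx\mathbf v$, so $x^n$ is not an isoterm for $\mathbf V$.

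For the forward direction, I would start from a word $\mathbf v\neq x^n$ with $\mathbf V\models x^n\approx\mathbf v$ and specialize this identity via substitution to reduce it to a one-variable identity of the form $x^n\approx x^m$ with $m\neq n$. Applying the substitution that sends every letter of $\mathcal X$ to $x$ yields $\mathbf V\models x^n\approx x^{|\mathbf v|}$, where $|\mathbf v|$ denotes the length of $\mathbf v$. If $|\mathbf v|>n$, the desired conclusion holds with $m=|\mathbf v|$. If $|\mathbf v|<n$, I would instead append $x^{n-|\mathbf v|}$ to both sides of this derived identity (a legitimate direct deduction in the sense of Proposition~\ref{P: deduction}, taking $\mathbf a=1$ and $\mathbf b=x^{n-|\mathbf v|}$), obtaining $\mathbf V\models x^{2n-|\mathbf v|}\approx x^n$ with $2n-|\mathbf v|>n$.

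The only slightly delicate case is $|\mathbf v|=n$, in which the substitution above yields the trivial identity $x^n\approx x^n$ and thus provides no information. Here I would use the hypothesis $\mathbf v\neq x^n$ directly: since $\mathbf v$ has length $n$ but is not the power $x^n$, it must contain some letter $y\neq x$. Applying the substitution $x\mapsto x$ and $z\mapsto x^2$ for every $z\in\mathcal X\setminus\{x\}$ then sends $x^n$ to itself while sending $\mathbf v$ to $x^{\occ_x(\mathbf v)+2(|\mathbf v|-\occ_x(\mathbf v))}=x^{2n-\occ_x(\mathbf v)}$. Since $\mathbf v$ contains at least one non-$x$ letter, $\occ_x(\mathbf v)<n$, whence $2n-\occ_x(\mathbf v)>n$, completing the argument.

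There is no genuine obstacle; the proof is pure bookkeeping on substitutions, which is presumably why the author labels the lemma obvious. The only mild care needed is the three-way split on $|\mathbf v|$ versus $n$, and in particular the choice of substitution in the equal-length case, where one must exploit the presence of a non-$x$ letter in $\mathbf v$ to strictly increase the exponent on the right-hand side.
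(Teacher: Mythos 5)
Your proof is correct and complete; the paper states this lemma without proof as obvious, and your argument (substitution of all letters by $x$, padding when the resulting exponent is too small, and the substitution $z\mapsto x^2$ for non-$x$ letters in the equal-length case) is exactly the routine bookkeeping being taken for granted. The three-way case split is handled properly, including the implicit case $\mathbf v=1$, which falls under $|\mathbf v|<n$.
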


The following statement was established in the proof of Lemma~3.5 in~\cite{Gusev-Vernikov-21}.

\begin{lemma}
\label{L: swapping in linear-balanced}
Let $\mathbf V$ be a monoid variety such that $M(xt_1x\cdots t_nx)\in\mathbf V$. 
If $M(\mathbf p\,xy\,\mathbf q)\notin\mathbf V$, where $\mathbf p:=a_1t_1\cdots a_kt_k$ and $\mathbf q:=t_{k+1}a_{k+1}\cdots t_{k+\ell}a_{k+\ell}$ for some $k,\ell\in\mathbb N_0$ and $a_1,\dots,a_{k+\ell}$ are letters such that $\{a_1,\dots,a_{k+\ell}\}=\{x,y\}$ and $\occ_x(\mathbf p\mathbf q),\occ_y(\mathbf p\mathbf q)\le n$, then $\mathbf V$ satisfies the identity $\mathbf p\,xy\,\mathbf q\approx\mathbf p\,yx\,\mathbf q$.\qed
\end{lemma}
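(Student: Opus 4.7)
The plan is to start by translating the hypotheses via Lemma~\ref{L: M(W) in V}: the word $\mathbf w_0 := xt_1x\cdots t_nx$ is an isoterm for $\mathbf V$, while $\mathbf u := \mathbf p\,xy\,\mathbf q$ is not. Hence $\mathbf V$ satisfies some non-trivial identity $\mathbf u \approx \mathbf v$, and the task reduces to showing that any such $\mathbf v$ must equal $\mathbf p\,yx\,\mathbf q$. The idea is to reconstruct $\mathbf v$ from projections of $\mathbf u \approx \mathbf v$ by substitution, exploiting isotermness of various auxiliary words built from $\mathbf w_0$.

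The first projection, $y\mapsto 1$, gives $\mathbf V \models \mathbf p_y x \mathbf q_y \approx \mathbf v_y$. The \emph{key technical claim} is that $\mathbf p_y x \mathbf q_y$ is itself an isoterm for $\mathbf V$: this word has at most $n+1$ occurrences of the multiple letter $x$, with consecutive $x$'s separated by non-empty blocks of distinct simple letters from $\{t_1,\dots,t_{k+\ell}\}$, and such ``$x$-run'' words should inherit isotermness from $\mathbf w_0$. Granting this, $\mathbf v_y = \mathbf p_y x \mathbf q_y$; dually (using $x\mapsto 1$), $\mathbf v_x = \mathbf p_x y \mathbf q_x$. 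Next, for each $t_i$, sending every other letter to~$1$ reduces $\mathbf u\approx\mathbf v$ to $t_i\approx t_i^{r}$ with $r=\occ_{t_i}(\mathbf v)$; since $t_i$ is a one-letter factor of a renaming of $\mathbf w_0$, hence an isoterm for $\mathbf V$, we get $r=1$. So each $t_i$ is simple in $\mathbf v$, and the already-established form of $\mathbf v_y$ forces the relative order of the $t_i$'s in $\mathbf v$ to match that in $\mathbf u$.

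These facts force $\mathbf v$ to decompose into ``gaps'' between consecutive $t_i$'s paralleling those of $\mathbf u$: the $x$-content of each gap is read off from $\mathbf v_y$ and the $y$-content from $\mathbf v_x$. Every gap of $\mathbf u$ except the middle one is a single letter, so the matching gap of $\mathbf v$ must be identical. The middle gap of $\mathbf u$ is $xy$, so the middle gap of $\mathbf v$ contains one $x$ and one $y$ in some order, hence is either $xy$ or $yx$. Since $\mathbf v\neq\mathbf u$, the middle gap of $\mathbf v$ is $yx$, whence $\mathbf v=\mathbf p\,yx\,\mathbf q$, as required.

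The main obstacle is verifying the isoterm claim for $\mathbf p_y x \mathbf q_y$: one must show that inflating each single-letter separator in $\mathbf w_0$ into a non-empty block of fresh simple letters (while keeping at most $n+1$ occurrences of $x$) preserves isotermness. This is a routine but delicate structural fact, provable by a substitution argument using Birkhoff's Completeness Theorem (Proposition~\ref{P: deduction}) together with the closure of isoterms under letter-renaming and under taking factors.
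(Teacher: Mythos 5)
The paper does not actually prove this lemma: it is imported verbatim from the proof of Lemma~3.5 in Gusev--Vernikov (2021), so your proposal can only be judged on its own merits. Your overall architecture is the right one and your endgame is correct: once one knows that $\mathbf v_y=\mathbf p_y\,x\,\mathbf q_y$ and $\mathbf v_x=\mathbf p_x\,y\,\mathbf q_x$, the gap-by-gap comparison forces every block of $\mathbf v$ except the middle one to coincide with that of $\mathbf u$, and non-triviality forces the middle block to be $yx$.

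The genuine gap is that everything rests on the isoterm claim for $\mathbf p_y\,x\,\mathbf q_y$, and the tools you name for it (closure of isoterms under renaming and under taking factors) are not sufficient as stated. The extremal case is $\occ_x(\mathbf p\mathbf q)=n$ with $a_1=y$ (or $a_{k+\ell}=y$): then $\mathbf u_y$ has $n+1$ occurrences of $x$ but begins (or ends) with a simple letter, so it is not a renaming of any factor of $\mathbf w_0=xt_1x\cdots t_nx$, and no deletion of letters turns it into one while keeping the offending outer letter. Worse, $x^{n+1}$ itself need not be an isoterm (e.g.\ $M(xtx)$ satisfies $x^2\approx x^3$ and even $tx^2\approx x^2t$), so occurrence counts cannot be read off letter by letter. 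The claim is nevertheless true, but the proof requires a combination of projections: (1) pairwise projections onto simple letters show, using that $x$ and $xy$ are isoterms (factors of $\mathbf w_0$), that all $t_i$ stay simple in the other side and keep their order; (2) projecting onto $x$ together with one chosen letter from each \emph{internal} block yields a renaming of a prefix of $\mathbf w_0$ (here the hypothesis $\occ_x(\mathbf p\mathbf q)\le n$ is used), which fixes $\occ_x$ and places every internal letter in its correct gap; (3) for a letter $t$ of an outer block, the order constraint from (1) confines $t$ to the outermost or the adjacent gap, and the adjacent gap is excluded by a projection in which the \emph{other} side becomes a renaming of $\mathbf w_0$ itself. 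Without step (3), or something equivalent, the claim ``$\mathbf v_y=\mathbf p_y x\mathbf q_y$'' is not established, and that is precisely where the hypothesis $M(xt_1x\cdots t_nx)\in\mathbf V$ does its real work.
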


\section{Several auxiliary results}
\label{sec: auxiliary}

\subsection{Certain non-small varieties}
\label{subsec: non-small}

Let 
\[
\hat{\mathbb N}_0^2 :=\{(n,m)\in \mathbb N_0\times \mathbb N_0\mid |n-m|\le 1\}.
\]
For $n,m\in\mathbb N_0$, a permutation $\rho$ from $S_{n+m}$ is an $(n,m)$-\textit{permutation} if, for all $i=1,2,\dots,n+m-1$, one of the following holds:
\begin{itemize}
\item $1\le i\rho\le n$ and $n<(i+1)\rho\le n+m$;
\item $1\le (i+1)\rho\le n$ and $n<i\rho\le n+m$.
\end{itemize}
Evidently, if $\rho$ is a $(n,m)$-permutation, then $(n,m)\in \hat{\mathbb N}_0^2$.
The set of all $(n,m)$-permutations is denoted by $S_{n,m}$.

\begin{proposition}
\label{P: non-small}
The following varieties violate both the ACC and the DCC:
\begin{itemize}
\item[\textup{(i)}] $\mathbf M(\mathbf a_{n,m}[\rho])$, where $(n,m)\in \hat{\mathbb N}_0^2$ and $\rho\in S_{n,m}$;
\item[\textup{(ii)}] $\mathbf M(\mathbf c_{n,m,0}[\rho])$, where $n,m\in \mathbb N_0$ and $\rho\in S_{n+m}$;
\item[\textup{(iii)}] $\mathbf M(\mathbf c_{n,m,n+m+1}[\rho])$, where $n,m\in \mathbb N_0$ and $\rho\in S_{n+m,n+m+1}$.
\end{itemize}
\end{proposition}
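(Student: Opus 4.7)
The plan is to exhibit, for each of the three families, both an infinite strictly ascending chain and an infinite strictly descending chain of subvarieties, thereby simultaneously refuting the ACC and the DCC. The main instrument is Lemma~\ref{L: M(W) in V}: a Rees quotient $M(\mathcal W)$ lies in $\mathbf M(\mathbf w)$ precisely when every element of $\mathcal W$ is an isoterm for $\mathbf M(\mathbf w)$. Combined with the elementary monotonicity $\mathcal W\subseteq\mathcal W'\Rightarrow\mathbf M(\mathcal W)\subseteq\mathbf M(\mathcal W')$ (which holds because $M(\mathcal W)$ is a Rees quotient of $M(\mathcal W')$), this reduces the task to producing the right isoterms and then verifying strictness.

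First I would construct, in each of the three cases, a countable family $\{\mathbf u_k\}_{k\in\mathbb N}$ of words that are isoterms for the generating variety $\mathbf M(\mathbf w)$. The natural candidates are elongations of $\mathbf w$: for case (i), extend $\mathbf a_{n,m}[\rho]$ by inserting $k$ further pairs $z_it_i$ at the front and $t_iz_i$ at the back, and extend $\rho$ to a $(n+k,m+k)$-permutation whose alternating pattern refines the original one; for cases (ii) and (iii), perform analogous elongations of $\mathbf c_{n,m,0}[\rho]$ and $\mathbf c_{n,m,n+m+1}[\rho]$, respecting the membership in $S_{n+m}$ and $S_{n+m,n+m+1}$ respectively. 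Checking that each such $\mathbf u_k$ is an isoterm for $\mathbf M(\mathbf w)$ is a direct inspection: any non-trivial identity $\mathbf u_k\approx\mathbf v$ satisfied by $M(\mathbf w)$ would have to arise from a rewriting of a subword of $\mathbf u_k$ that is itself not an isoterm, and the elongation scheme is arranged precisely to exclude such rewrites.

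With the family in hand, the ascending chain is $\mathbf V_k:=\mathbf M(\mathbf u_1,\dots,\mathbf u_k)\subseteq\mathbf M(\mathbf w)$; its strictness step by step is certified by producing, for each $k$, an identity $\mathbf e_k$ that holds in every $M(\mathbf u_j)$ with $j\le k$ but fails in $M(\mathbf u_{k+1})$. For the descending chain, I would set $\mathbf W_k:=\mathbf M(\mathbf w)\{\mathbf f_1,\dots,\mathbf f_k\}$, where each $\mathbf f_k$ is an identity obtained by swapping a specific adjacent pair of multiple letters in a fresh sub-context of $\mathbf u_k$; Lemma~\ref{L: swapping in linear-balanced} is the tool certifying that such a swap becomes forced once the relevant ``linear-balanced'' isoterm is no longer available, while independence of the successive $\mathbf f_k$'s (so that $\mathbf W_k\supsetneq\mathbf W_{k+1}$) is ensured by using disjoint variable sets across different $k$.

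The main obstacle is the combinatorial bookkeeping for the two simultaneous verifications: that each elongated word $\mathbf u_k$ is genuinely an isoterm for $\mathbf M(\mathbf w)$, and that the distinguishing identities $\mathbf e_k$, $\mathbf f_k$ really separate the consecutive terms in each chain. Case (i), with its single multiple letter $x$, is the most tractable: every candidate swap is of the form $\mathbf p\,xy\,\mathbf q\approx\mathbf p\,yx\,\mathbf q$ covered directly by Lemma~\ref{L: swapping in linear-balanced}. Cases (ii) and (iii) carry two nested multiple letters $x,y$ whose joint occurrence structure is constrained by $S_{n+m}$ or $S_{n+m,n+m+1}$, and the admissible swaps must respect this nesting; this is where I expect the heaviest technical work, since one has to argue that the longer word still forces enough ``linear-balanced'' contexts on the pair $(x,y)$ so that Lemma~\ref{L: swapping in linear-balanced} can be invoked in a coordinated way.
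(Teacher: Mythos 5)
Your top-level strategy (build an infinite strictly ascending chain and an infinite strictly descending chain of subvarieties of $\mathbf M(\mathbf w)$, using Lemma~\ref{L: M(W) in V} to control membership) matches the paper's in outline, but the concrete constructions you propose either are not justified or would fail. The central gap is in the ascending chain. Your candidates $\mathbf u_k$ are ``elongations'' of $\mathbf w$ obtained by inserting more $z_it_i$ pairs, and you assert that any non-trivial identity $\mathbf u_k\approx\mathbf v$ of $\mathbf M(\mathbf w)$ ``would have to arise from a rewriting of a subword that is itself not an isoterm'' --- this is not an argument, and more importantly the elongation scheme gives you no handle on strictness. The paper's words $\mathbf a_k$ (and $\mathbf c_k$) are not elongations of $\mathbf a_{n,m}[\rho]$: they contain a new overlapping chain of multiple letters $x_1,\dots,x_k$ (interleaved as $\dots x_i \dots x_{i+1}\dots x_i \dots$) together with hatted copies of the $z$'s, engineered so that (a) a detailed occurrence analysis shows $\mathbf a_k$ is an isoterm for $\mathbf M(\mathbf a_{n,m}[\rho])$, and (b) $M(\mathbf a_k)$ satisfies $\mathbf a_{k+1}\approx x_1^2(\mathbf a_{k+1})_{x_1}$ while $M(\mathbf a_{k+1})$ does not. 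Point (b) is the heart of strictness, and it is proved by a substitution/pigeonhole argument: any embedding of $\varphi(\mathbf a_{k+1})$ into $\mathbf a_k$ must map the chain $x_1,\dots,x_{k+1}$ order-preservingly and injectively (starting at $x_1$) into the strictly shorter chain $x_1,\dots,x_k$, which forces a simple letter of $\mathbf a_k$ to be the image of a product of multiple letters --- a contradiction. Nothing in your sketch supplies a mechanism of this kind, and without the chain structure there is no reason the separating identities $\mathbf e_k$ you postulate should exist.

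The descending-chain half has a second, independent error: you claim that independence of the identities $\mathbf f_k$ ``is ensured by using disjoint variable sets across different $k$.'' Equational theories are closed under renaming of variables, so putting $\mathbf f_{k+1}$ in fresh variables does nothing to prevent it from being a consequence of $\mathbf f_1,\dots,\mathbf f_k$ modulo the theory of $\mathbf M(\mathbf w)$; to get $\mathbf W_k\supsetneq\mathbf W_{k+1}$ you must exhibit a monoid in $\mathbf W_k\setminus\mathbf W_{k+1}$, which again requires structurally growing words and a separation argument. (The paper sidesteps this entirely by citing the DCC failures from Propositions~3.4 and~3.5 of~\cite{Gusev-23} and Lemma~3.4 of~\cite{Gusev-Vernikov-21}, and devotes its proof to the ACC.) Finally, Lemma~\ref{L: swapping in linear-balanced} is a tool for deriving swap identities in varieties that \emph{omit} certain Rees quotients; it is not suited to certifying the independence of your $\mathbf f_k$'s inside $\mathbf M(\mathbf w)$.
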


\begin{proof}
It follows from the proofs of Propositions~3.4 and~3.5 in~\cite{Gusev-23} that the varieties $\mathbf M(\mathbf a_{n,m}[\rho])$ and $\mathbf M(\mathbf c_{n,m,n+m+1}[\rho])$ violate the DCC, while the variety $\mathbf M(\mathbf c_{n,m,0}[\rho])$ does not satisfy the DCC by Lemma~3.4 in~\cite{Gusev-Vernikov-21}.
So, it remains to verify that these varieties violate the ACC.

\smallskip

\noindent\textit{The variety $\mathbf M(\mathbf a_{n,m}[\rho])$ violates the ACC}.
Since $\rho$ is is an $(n,m)$-permutation, there are four possibilities:
\begin{itemize}
\item $n=m$ and $1\le 1\rho\le n$;
\item $n=m$ and $n+1\le 1\rho\le 2n$;
\item $n=m+1$ and so $1\le 1\rho\le n$;
\item $n=m-1$ and so $n+1\le 1\rho\le n+m$.
\end{itemize}
We will consider only the first possibility because the other ones are considered quite analogous.
In this case, $1\le i\rho\le n$ and $n+1\le (i+1)\rho\le 2n$ for any $i=1,3,\dots,2n-1$.

It is shown in the proof of Propositions~3.4 in~\cite{Gusev-23} that each variety of the form  variety $\mathbf M(\mathbf a_{n,m}[\rho])$ contains subvarieties of the same form with $n,m\ge2$.
This allows us to assume that $n,m\ge2$.
Now, for any $k\ge3$, put
\[
\mathbf a_k:=\biggl(\prod_{i=1}^k \mathbf p_{i}\biggr)x_1\mathbf r_1\biggl(\prod_{i=1}^{k-2}\hat{z}_{1\rho}^{(i+1)}x_{i+1}\mathbf r_{i+1}x_{i}\hat{z}_{(n+m)\rho}^{(i+1)}\biggr)\hat{z}_{1\rho}^{(k)}x_k\mathbf r_kx_{k-1}tx_k\biggl(\prod_{i=1}^k \mathbf q_{i}\biggr),
\]
where
\[
\mathbf p_j:=\biggl(\prod_{i=1}^n z_i^{(j)}t_i^{(j)}\hat{z}_i^{(j)}\hat{t}_i^{(j)}\biggr),\  
\mathbf q_j:=\biggl(\prod_{i=n+1}^{n+m} \hat{t}_i^{(j)}\hat{z}_i^{(j)}t_i^{(j)}z_i^{(j)}\biggr),\ 
\mathbf r_j:=\biggl(\prod_{i=1}^{n+m} z_{i\rho}^{(j)}\biggr),\ j\in\mathbb N.
\]
Since $\psi(\mathbf a_k)\mathbf q_{k+1}=\mathbf a_{k+1}$, where $\psi\colon \mathcal X\to \mathcal X^\ast$ is the substitution given by
\[
\psi(v):=
\begin{cases}
\hat{z}_{(n+m)\rho}^{(k)}\hat{z}_{1\rho}^{(k+1)}x_{k+1}\mathbf r_{k+1}&\text{if } v=t,\\
\hat{t}_n^{(k)}\mathbf p_{k+1}&\text{if } v=\hat{t}_n^{(k)},\\
tx_{k+1}\hat{t}_{n+1}^{(1)}&\text{if } v=\hat{t}_{n+1}^{(1)},\\
v&\text{otherwise},
\end{cases}
\]
Lemma~\ref{L: M(W) in V} implies that
\begin{equation}
\label{inclusions M(a_{n,m}[rho])}
\mathbf M(\mathbf a_3)\subseteq\mathbf M(\mathbf a_4)\subseteq\cdots\subseteq\mathbf M(\mathbf a_k)\subseteq\cdots
\end{equation}

We need the following auxiliary result.

\begin{lemma}
\label{L: a_X isoterm}
Let $\mathbf V$ be a variety such that the word $xzxyty$ is an isoterm for $\mathbf V$.
Then the word $(\mathbf a_k)_{\{x_1,\dots,x_{k-1}\}}$ is an isoterm for $\mathbf V$ for any $k\ge3$.
\end{lemma}

\begin{proof}
Let $(\mathbf a_k)_{\{x_1,\dots,x_{k-1}\}}\approx \mathbf a^\prime$ be an identity of $\mathbf V$.
Clearly, the word $xyx$ is an isoterm for $\mathbf V$. 
Hence 
\[
\mathbf a^\prime=\biggl(\prod_{i=1}^k \mathbf p_{i}\biggr)\mathbf htx_k\biggl(\prod_{i=1}^k \mathbf q_{i}\biggr),
\]
where $\mathbf h$ is a word with
\[
\simple(\mathbf h)=\alf(\mathbf h)=\alf(\mathbf r_1\cdots\mathbf r_k)\cup\{x_k,z_{1\rho}^{(2)},z_{(n+m)\rho}^{(2)},\dots,z_{1\rho}^{(k-1)},z_{(n+m)\rho}^{(k-1)},z_{1\rho}^{(k)}\}.
\]
It follows that if $\mathbf h$ does not coincide with the subword of $(\mathbf a_k)_{\{x_1,\dots,x_{k-1}\}}$ lying between $\hat{t}_n^{(k)}$ and $\hat{t}_{n+1}^{(1)}$, then the identity $(\mathbf a_k)_{\{x_1,\dots,x_{k-1}\}}\approx \mathbf a^\prime$ implies the identity $\sigma_3$, contradicting the condition of the lemma.
Therefore, $\mathbf a^\prime=(\mathbf a_k)_{\{x_1,\dots,x_{k-1}\}}$ and so $(\mathbf a_k)_{\{x_1,\dots,x_{k-1}\}}$ is an isoterm for $\mathbf V$.
\end{proof}

Consider an arbitrary identity of the form $\mathbf a_k\approx\mathbf a$ that is satisfied by $M(\mathbf a_{n,m}[\rho])$. 
Since the word $xzxyty$ is an isoterm for $\mathbf M(\mathbf a_{n,m}[\rho])$, it follows from Lemma~\ref{L: a_X isoterm} that $\mathbf a_{\{x_1,\dots,x_{k-1}\}}=(\mathbf a_k)_{\{x_1,\dots,x_{k-1}\}}$.
Since the word $\mathbf a_k(\mathcal T_j)$ coincides (up to renaming of letters) with the word $\mathbf a_{n,m}[\rho]$, where
\[
\mathcal T_j:=\{x_1,z_i^{(j)},t_i^{(j)}\mid 1\le i\le n+m\}, \ j=1,2,
\]
Lemma~\ref{L: M(W) in V} implies that $\mathbf a(\mathcal T_j)=\mathbf a_k(\mathcal T_j)$, whence $({_{1\mathbf a}x_1})<({_{2\mathbf a}z_{1\rho}^{(1)}})$ and $({_{1\mathbf a}z_{(n+m)\rho}^{(2)}})<({_{2\mathbf a}x_1})$.
Further, $({_{2\mathbf a}x_1})<({_{1\mathbf a}\hat{z}_{(n+m)\rho}^{(2)}})$ because the word $\mathbf a(\{x_1,\hat{z}_{(n+m)\rho}^{(2)},\hat{t}_{(n+m)\rho}^{(2)},z_i^{(2)},t_i^{(2)}\mid 1\le i\le n+m-1\})$ coincides (up to renaming of letters) with the word $\mathbf a_{n,m}[\rho]$ otherwise.
Finally, $({_{1\mathbf a}t_n^{(k)}})<({_{1\mathbf a}x_1})$ since $xyx$ is an isoterm for $\mathbf M(\mathbf a_{n,m}[\rho])$.
By a similar argument, we can show that
\[
\begin{aligned}
&({_{2\mathbf a}\hat{z}_{1\rho}^{(i)}})<({_{1\mathbf a}x_i})<({_{2\mathbf a}z_{1\rho}^{(i)}}),\ 
({_{1\mathbf a}z_{(n+m)\rho}^{(i+1)}})<({_{2\mathbf a}x_1})<({_{1\mathbf a}\hat{z}_{(n+m)\rho}^{(i+1)}}),\ i=2,\dots,k-2,\\
&({_{2\mathbf a}\hat{z}_{1\rho}^{(k-1)}})<({_{1\mathbf a}x_{k-1}})<({_{2\mathbf a}z_{1\rho}^{(k-1)}}),\ 
({_{1\mathbf a}z_{(n+m)\rho}^{(k)}})<({_{2\mathbf a}x_{k-1}})<({_{1\mathbf a}t}). 
\end{aligned}
\]
We see that $\mathbf a_k$ is an isoterm for $\mathbf M(\mathbf a_{n,m}[\rho])$.
Hence $M(\mathbf a_k)\in \mathbf M(\mathbf a_{n,m}[\rho])$ by Lemma~\ref{L: M(W) in V}. 

It remains to show that all inclusions in~\eqref{inclusions M(a_{n,m}[rho])} are strict.
To do this, verify that $M(\mathbf a_k)$ satisfies the identity $\mathbf a_{k+1}\approx x_1^2(\mathbf a_{k+1})_{x_1}$.
Consider an arbitrary substitution $\varphi\colon \mathscr X\to M(\mathbf a_k)$.
We are going to show that $\varphi(\mathbf a_{k+1})=\varphi(x_1^2(\mathbf a_{k+1})_{x_1})$.
If $\varphi({x_1})=1$, then the required claim is evident.
Let now $\varphi({x_1})\ne 1$.
Then $\varphi(x_1^2(\mathbf a_{k+1})_{x_1})=0$ because the word $\mathbf a_k$ is square-free.
Arguing by contradiction, suppose that $\varphi(\mathbf a_{k+1})$ is a subword of $\mathbf a_k$, i.e., there are $\mathbf a,\mathbf b\in\mathscr X^\ast$ such that $\mathbf a_k=\mathbf a\varphi(\mathbf a_{k+1})\mathbf b$.

Notice that every subword of length $2$ of $\mathbf a_k$ has the unique occurrence in $\mathbf a_k$.
Since each letter occurs in $\mathbf a_k$ at most twice, it follows that
\begin{itemize}
\item[\textup{($\ast$)}] for any $c\in\mul(\mathbf a_{k+1})$, either $\varphi(c)=1$ or $(\varphi({_{1\mathbf a_{k+1}}}c),\varphi({_{2\mathbf a_{k+1}}}c))=({_{1\mathbf a_k}}d,{_{2\mathbf a_k}}d)$ for some $d\in\mul(\mathbf a_k)$.
\end{itemize}
Then, since $\{x_1,\dots,x_{j-1}\}$ is the set of all multiple letters in $\mathbf a_j$ between the first and second occurrences of which there are no simple letters, it follows that $(\varphi({_{1\mathbf a_{k+1}}}x_1),\varphi({_{2\mathbf a_{k+1}}}x_1))=({_{1\mathbf a_{k+1}}}x_r,{_{2\mathbf a_{k+1}}}x_r)$ for some $r\in\{1,\dots,k-1\}$.
Then 
\[
\varphi(x_1\mathbf r_1\hat{z}_{1\rho}^{(2)}x_2\mathbf r_2x_1)=
\begin{cases}
x_1\mathbf r_1\hat{z}_{1\rho}^{(2)}x_2\mathbf r_2x_1&\text{if } r=1,\\
x_r\mathbf r_rx_{r-1}\hat{z}_{(n+m)\rho}^{(r)}\hat{z}_{1\rho}^{(r+1)}x_{r+1}\mathbf r_{r+1}x_r&\text{if } r=2,\dots,k-1.
\end{cases}
\]
By~($\ast$), $r=1$ and $(\varphi({_{1\mathbf a_{k+1}}}x_2),\varphi({_{2\mathbf a_{k+1}}}x_2))=({_{1\mathbf a_{k+1}}}x_2,{_{2\mathbf a_{k+1}}}x_2))$.
Then 
\[
\varphi(x_2\mathbf r_2x_1\hat{z}_{(n+m)\rho}^{(2)}\hat{z}_{1\rho}^{(3)}x_3\mathbf r_3x_2)=x_2\mathbf r_2x_1\hat{z}_{(n+m)\rho}^{(2)}\hat{z}_{1\rho}^{(3)}x_3\mathbf r_3x_2.
\]
Now~($\ast$) applies again, yielding that $(\varphi({_{1\mathbf a_{k+1}}}x_2),\varphi({_{2\mathbf a_{k+1}}}x_2))=({_{1\mathbf a_{k+1}}}x_2,{_{2\mathbf a_{k+1}}}x_2))$.
Continuing the process, we can show by induction that $(\varphi({_{1\mathbf a_{k+1}}}x_i),\varphi({_{2\mathbf a_{k+1}}}x_i))=({_{1\mathbf a_{k+1}}}x_i,{_{2\mathbf a_{k+1}}}x_i))$ for any $i=1,\dots,k$.
Hence 
\[
\varphi(x_k\mathbf r_kx_{k-1}\hat{z}_{(n+m)\rho}^{(k)}\hat{z}_{1\rho}^{(k+1)}x_{k+1}\mathbf r_{k+1}x_k)=x_k\mathbf r_kx_{k-1}tx_k,
\] 
and so $\varphi(\hat{z}_{(n+m)\rho}^{(k)}\hat{z}_{1\rho}^{(k+1)}x_{k+1}\mathbf r_{k+1})=t$, contradicting the fact that $t\in\simple(\mathbf a_k)$, while 
\[
\alf(\hat{z}_{(n+m)\rho}^{(k)}\hat{z}_{1\rho}^{(k+1)}x_{k+1}\mathbf r_{k+1})\subseteq\mul(\mathbf a_{k+1}).
\]
Thus, we have proved that $\mathbf a_{k+1}\approx x_1^2(\mathbf a_{k+1})_{x_1}$ is satisfied by $M(\mathbf a_k)$.
This implies that all the inclusions in~\eqref{inclusions M(a_{n,m}[rho])} are strict and, therefore, $\mathbf M(\mathbf a_{n,m}[\rho])$ violates the ACC.

\smallskip

\noindent\textit{The variety $\mathbf M(\mathbf c_{n,m,0}[\rho])$ violates the ACC}.
It is shown in the proof of Lemma~3.4 in~\cite{Gusev-Vernikov-21} that each variety of the form  variety $\mathbf M(\mathbf c_{n,m,0}[\rho])$ contains subvarieties of the same form with $n,m\ge2$.
This allows us to assume that $n,m\ge2$.
Now for any $k\in\mathbb N_0$, put
\[
\mathbf c_k:=\biggl(\prod_{i=0}^k \mathbf p_{2i}\biggr)x_0y_0\biggl(\prod_{i=1}^k \mathbf s_{2i-1}\biggr)s_{2k+1}x_{2k+1}s_{2k+1}^\prime y_{2k+1}t\biggl(\prod_{i=0}^k \mathbf q_{2i}\biggr)\biggl(\prod_{i=k}^0 \mathbf r_{2i}\biggr),
\]
where
\begin{align}
\label{p_j=}
&\mathbf p_j:=\biggl(\prod_{i=1}^n z_i^{(j)}t_i^{(j)}\biggr),\\
\label{q_j=}
&\mathbf q_j:=\biggl(\prod_{i=n+1}^{n+m} z_i^{(j)}t_i^{(j)}\biggr),\\
\notag
&\mathbf r_j:=s_jx_jz_{1\rho}^{(j)}x_{j+1}\biggl(\prod_{i=2}^{n+m-1} z_{i\rho}^{(j)}\biggr)y_{j+1}z_{(n+m)\rho}^{(j)}y_j,\\
\label{s_j=}
&\mathbf s_j:=s_jx_jx_{j+1}y_{j+1}y_j,\ j\in\mathbb N_0.
\end{align}
Since $\psi(\mathbf c_k)=\mathbf c_{k+1}$, where $\psi\colon \mathcal X\to \mathcal X^\ast$ is the substitution given by
\[
\psi(v):=
\begin{cases}
x_{2k+3}s_{2k+3} y_{2k+3}t&\text{if } v=t,\\
t_n^{(2k)}\mathbf p_{2k+2}&\text{if } v=t_n^{(2k)},\\
t_{n+m}^{(2k)}\mathbf q_{2k+2}&\text{if } v=t_{n+m}^{(2k)},\\
\mathbf r_{2k+2}s_{2k}&\text{if } v=s_{2k},\\
x_{2k+2}y_{2k+2}&\text{if } v=s_{2k+1}^\prime,\\
v&\text{otherwise},
\end{cases}
\]
Lemma~\ref{L: M(W) in V} implies that
\begin{equation}
\label{inclusions M(c_{n,m,0}[rho])}
\mathbf M(\mathbf c_0)\subseteq\mathbf M(\mathbf c_1)\subseteq\cdots\subseteq\mathbf M(\mathbf c_k)\subseteq\cdots
\end{equation}

We need the following auxiliary result which is a consequence of Lemma~2.5 in~\cite{Gusev-19} and the dual statement.

\begin{lemma}
\label{L: c_k=c}
Let $\mathbf V$ be a variety such that the words $xyzxty$ and $xzytxy$ are isoterms for $\mathbf V$.
If $\mathbf V$ satisfies an identity of the form $\mathbf c_k\approx \mathbf c$, then
\begin{equation}
\label{c=}
\mathbf c=\biggl(\prod_{i=0}^k \mathbf p_{2i}\biggr)\mathbf d_0\biggl(\prod_{i=0}^k \mathbf q_{2i}\biggr)\biggl(\prod_{i=1}^k x_{2i-1}\mathbf d_{2i}y_{2i-1}\biggr)x_{2k+1}s_{2k+1} y_{2k+1}\biggl(\prod_{i=k}^0 \mathbf r_{2i}\biggr),
\end{equation}
where $\mathbf d_{2j}\in\{x_{2j}y_{2j},y_{2j}x_{2j}\}$, $j=0,\dots,k$.\qed
\end{lemma}

If $M(\mathbf c_k)\notin \mathbf M(\mathbf c_{n,m,0}[\rho])$, then $M(\mathbf c_{n,m,0}[\rho])$ satisfies some non-trivial identity $\mathbf c_k\approx\mathbf c$ by Lemma~\ref{L: M(W) in V}. 
Since the words $xyzxty$ and $xzytxy$ are isoterms for $\mathbf M(\mathbf c_{n,m,0}[\rho])$, it follows from Lemma~\ref{L: c_k=c} that the equality~\eqref{c=} holds, where $\mathbf d_{2j}\in\{x_{2j}y_{2j},y_{2j}x_{2j}\}$, $j=0,\dots,k$.
Since the identity $\mathbf c_k\approx\mathbf c$ is non-trivial, there is $s\in\{0,\dots,k\}$ such that $\mathbf d_{2s}=y_{2s} x_{2s}$.
Then the identity $\mathbf c_k(\mathcal T)\approx\mathbf c(\mathcal T)$, where
\[
\mathcal T:=\{x_{2s},y_{2s},s_{2s-1},t_i^{(2s)},z_i^{(2s)}\mid 1\le i\le n+m\},
\] 
coincides (up to renaming of letters) with the identity $\mathbf c_{n,m,0}[\rho]\approx \mathbf c_{n,m,0}^\prime[\rho]$, contradicting the fact that $\mathbf M(\mathbf c_{n,m,0}[\rho])$ satisfies $\mathbf c_k\approx\mathbf c$.
Therefore, $\mathbf M(\mathbf c_k)\subseteq\mathbf M(\mathbf c_{n,m,0}[\rho])$ for any $k\in\mathbb N_0$.

It remains to show that all inclusions in~\eqref{inclusions M(c_{n,m,0}[rho])} are strict.
To do this, verify that $M(\mathbf c_k)$ satisfies the identity $\mathbf c_{k+1}\approx \mathbf c_{k+1}^\prime$, where $\mathbf c_{k+1}^\prime$ is the word obtained from $\mathbf c_{k+1}$ by swapping the first occurrences of $x_0$ and $y_0$.
Consider an arbitrary substitution $\varphi\colon \mathscr X\to M(\mathbf c_k)$.
We are going to show that $\varphi(\mathbf c_{k+1})=\varphi(\mathbf c_{k+1}^\prime)$.
We may assume that either $\varphi(\mathbf c_{k+1})$ or $\varphi(\mathbf c_{k+1}^\prime)$ is a subword of $\mathbf c_k$.
If $\varphi(x_0)=1$ or $\varphi(y_0)=1$, then the required claim is evident.
Let now $\varphi(x_0)\ne1$ and $\varphi(y_0)\ne1$.
Since the words $xyzxty$ and $xzytxy$ are isoterms for $\mathbf M(\mathbf c_k)$, Lemma~\ref{L: c_k=c} implies that $\{\varphi(x_0),\varphi(y_0)\}=\{x_{2r},y_{2r}\}$ for some $r\in\{0,\dots,k\}$.
Then $\varphi(\mathbf c_{k+1}^\prime)=0$ because the word $({_{1\mathbf c_k}}x_{2r})<({_{1\mathbf c_k}}y_{2r})<({_{2\mathbf c_k}}x_{2r})<({_{2\mathbf c_k}}y_{2r})$, while $({_{1\mathbf c_{k+1}^\prime}}y_0)<({_{1\mathbf c_{k+1}^\prime}}x_0)<({_{2\mathbf c_{k+1}^\prime}}x_0)<({_{2\mathbf c_{k+1}^\prime}}y_0)$.
Therefore, $\varphi(\mathbf c_{k+1})$ is a subword of $\mathbf c_k$, i.e., there are $\mathbf a,\mathbf b\in\mathscr X^\ast$ such that $\mathbf c_k=\mathbf a\varphi(\mathbf c_{k+1})\mathbf b$.
Then $(x_{2r},y_{2r})=(\varphi(x_0),\varphi(y_0))$ and so 
\[
\varphi\biggl(x_0z_{1\rho}^{(0)}x_{1}\biggl(\prod_{i=2}^{n+m-1} z_{i\rho}^{(0)}\biggr)y_{1}z_{(n+m)\rho}^{(0)}y_0\biggr)=x_{2r}z_{1\rho}^{({2r})}x_{2r+1}\biggl(\prod_{i=2}^{n+m-1} z_{i\rho}^{({2r})}\biggr)y_{{2r}+1}z_{(n+m)\rho}^{({2r})}y_{2r}
\]
Notice that every subword of length $2$ of $\mathbf c_k$ has the unique occurrence in $\mathbf c_k$.
Since each letter occurs in $\mathbf c_k$ at most twice, it follows that
\begin{itemize}
\item[\textup{($\ast$)}] for any $c\in\mul(\mathbf c_{k+1})$, either $\varphi(c)=1$ or $(\varphi({_{1\mathbf c_{k+1}}}c),\varphi({_{2\mathbf c_{k+1}}}c))=({_{1\mathbf c_k}}d,{_{2\mathbf c_k}}d)$ for some $d\in\mul(\mathbf c_k)$.
\end{itemize}
In particular, this fact implies that $(x_{2r+1},y_{2r+1})=(\varphi(x_1),\varphi(y_1))$.
If $r\le k-1$, then $\varphi(x_1x_2y_2y_1)=x_{2r+1}x_{2r+2}y_{2r+2}y_{2r+1}$.
Now~($\ast$) applies again, yielding that $(x_{2r+2},y_{2r+2})=(\varphi(x_2),\varphi(y_2))$.
Continuing this process, we can show by induction that $(x_{2r+i},y_{2r+i})=(\varphi(x_i),\varphi(y_i))$ for any $i=0,\dots,2k-2r+1$.
Hence 
\[
\varphi(x_{2k-2r+1}x_{2k-2r+2}y_{2k-2r+2}y_{2k-2r+1})=x_{2k+1}s_{2k+1}y_{2k+1}
\] 
and so $\varphi(x_{2k-2r+2}y_{2k-2r+2})=s_{2k+1}$, contradicting the fact that $s_{2k+1}\in\simple(\mathbf c_k)$, while $x_{2k-2r+2},y_{2k-2r+2}\in\mul(\mathbf c_{k+1})$.
Thus, we have proved that $\mathbf c_{k+1}\approx \mathbf c_{k+1}^\prime$ is satisfied by $M(\mathbf c_k)$.
This implies that all the inclusions in~\eqref{inclusions M(c_{n,m,0}[rho])} are strict and, therefore, $\mathbf M(\mathbf c_{n,m,0}[\rho])$ violates the ACC.

\smallskip

\noindent\textit{The variety $\mathbf M(\mathbf c_{n,m,n+m+1}[\rho])$ violates the ACC}.
It is shown in the proof of Proposition~3.4 in~\cite{Gusev-23} that each variety of the form  variety $\mathbf M(\mathbf c_{n,m,n+m+1}[\rho])$ contains infinitely many subvarieties of the same form.
This allows us to assume that $n,m\ge1$.
Now for any $k\in\mathbb N_0$, put
\[
\mathbf c_k:=\biggl(\prod_{i=0}^k \mathbf p_{2i}\biggr)x_0y_0\biggl(\prod_{i=1}^k \mathbf s_{2i-1}\biggr)s_{2k+1}x_{2k+1}s_{2k+1}^\prime y_{2k+1}t\biggl(\prod_{i=0}^k \mathbf q_{2i}\biggr)\biggl(\prod_{i=k}^0 \mathbf r_{2i}\biggr)\biggl(\prod_{i=0}^k \mathbf t_{2i}\biggr),
\]
where the words $\mathbf p_j$, $\mathbf q_j$ and $\mathbf s_j$ are defined by the formulas~\eqref{p_j=},~\eqref{q_j=} and~\eqref{s_j=}, respectively, and
\[
\begin{aligned}
&\mathbf r_j:=s_jx_jz_{1\rho}^{(j)}x_{j+1}x_{j+1}^\prime\biggl(\prod_{i=2}^{2n+2m} z_{i\rho}^{(j)}\biggr)y_{j+1}^\prime y_{j+1}z_{(2n+2m+1)\rho}^{(j)}y_j,\\
&\mathbf t_j:=\biggl(\prod_{i=n+m+1}^{2n+2m+1} t_i^{(j)}z_i^{(j)}\biggr)s_jx_jx_{j+1}y_{j+1}y_j,\ j\in\mathbb N_0.
\end{aligned}
\]
Using the fact that the words $xyzxty$ and $xzxyty$ are isoterms for $\mathbf M(\mathbf c_{n,m,n+m+1}[\rho])$ and $\mathbf M(\mathbf c_k)$, we can reason as in the proof of Item~(ii) and show that $\mathbf M(\mathbf c_k)\subseteq \mathbf M(\mathbf c_{n,m,n+m+1}[\rho])$, and all the inclusions in~\eqref{inclusions M(c_{n,m,0}[rho])} hold and are strict. 
The variety $\mathbf M(\mathbf c_{n,m,n+m+1}[\rho])$ thus violates the ACC.
We allow ourselves to omit the corresponding arguments because they are quite analogous to ones in the proof of Item~(ii).
\end{proof}

Let
\[
\mathbf N:=\var\{\Phi_2,\omega_2,\sigma_2,\sigma_3\}.
\]

\begin{corollary}
\label{C: non-small}
The variety $\mathbf M(xzxyty)\vee \mathbf N$ violates both the ACC and the DCC.
\end{corollary}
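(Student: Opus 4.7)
The plan is to deduce the Corollary from Proposition~\ref{P: non-small}(i) by exhibiting a subvariety of $\mathbf M(xzxyty)\vee\mathbf N$ of the form $\mathbf M(\mathbf a_{n,m}[\rho])$. I would choose the smallest admissible parameters: $n=0$, $m=1$, and $\rho$ the unique (hence identity) element of $S_1$, so that $\mathbf a_{0,1}[\rho]=xz_1xt_1z_1$. Since $(0,1)\in\hat{\mathbb N}_0^2$ and $\rho$ is vacuously an $(0,1)$-permutation, Proposition~\ref{P: non-small}(i) applies and guarantees that $\mathbf M(\mathbf a_{0,1}[\rho])$ violates both the ACC and the DCC.

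The crucial step is to verify that $xz_1xt_1z_1$ is an isoterm for $\mathbf N$: once this is established, Lemma~\ref{L: M(W) in V} gives $M(\mathbf a_{0,1}[\rho])\in\mathbf N$, and so
\[
\mathbf M(\mathbf a_{0,1}[\rho])\subseteq\mathbf N\subseteq\mathbf M(xzxyty)\vee\mathbf N.
\]
Since ACC and DCC violations of a subvariety are inherited by every ambient variety, the Corollary then follows.

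By Birkhoff's Completeness Theorem (Proposition~\ref{P: deduction}), establishing the isoterm property reduces to checking that no non-trivial identity can be directly deduced from any of the defining identities $\Phi_2$, $\omega_2$, $\sigma_2$, $\sigma_3$ of $\mathbf N$ with $xz_1xt_1z_1$ on one side. Because every letter of $xz_1xt_1z_1$ occurs at most twice and no two adjacent positions share a letter, no substitution produces $\varphi(x)^2$ as a contiguous substring, so $\Phi_2$ contributes only trivial deductions; similarly, no substitution yields three interleaved copies of $\varphi(x)$ within a five-letter window, ruling out $\omega_2$. For $\sigma_2$ and $\sigma_3$, a non-trivial direct deduction would require a substitution $\varphi$ whose images of the two twice-occurring letters on the LHS are both non-empty and fail to commute, with the resulting word fitting as a contiguous substring of $xz_1xt_1z_1$; a short enumeration using that $xz_1xt_1z_1$ has length five (with $x$ and $z_1$ each appearing twice and $t_1$ once) shows that every such attempt either exceeds the available length or forces a contradictory identification of letters.

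The main obstacle is this combinatorial case analysis for $\sigma_2$ and $\sigma_3$, where one must carefully enumerate the finitely many candidate substitutions consistent with the letter multiplicities of $xz_1xt_1z_1$, with special care for substitutions that collapse some $\sigma$-letters to the empty word and make the deduction trivial.
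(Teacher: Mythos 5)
Your reduction to Proposition~\ref{P: non-small}(i) via the word $\mathbf a_{0,1}[\rho]=xz_1xt_1z_1$ cannot work, and the crucial isoterm claim is false. Applying the substitution $x\mapsto x$, $z\mapsto 1$, $y\mapsto z_1$, $t\mapsto t_1$ to $\sigma_3\colon xzxyty\approx xzyxty$ directly deduces the non-trivial identity $x^2z_1t_1z_1\approx xz_1xt_1z_1$; this is precisely the ``collapse a $\sigma$-letter to the empty word'' case you flagged as needing special care, and it is fatal: $xz_1xt_1z_1$ is not an isoterm for $\mathbf N$, so $M(\mathbf a_{0,1}[\rho])\notin\mathbf N$ by Lemma~\ref{L: M(W) in V}. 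Retreating to the join does not help: since $xzxyty$ is square-free and a short check shows no substitution maps $xyxty$ non-trivially onto one of its subwords, $M(xzxyty)$ also satisfies $xyxty\approx x^2yty$, hence so does $\mathbf M(xzxyty)\vee\mathbf N$, and $M(\mathbf a_{0,1}[\rho])$ does not lie in the join at all.

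More fundamentally, no choice of word can rescue a membership argument routed through $\mathbf N$ alone: by definition $\mathbf N=\var\{\Phi_2,\omega_2,\sigma_2,\sigma_3\}=\mathbf Q_2\{\omega_2\}=\mathbf S_2$, which is small by the implication (iii)$\Rightarrow$(i) of Theorem~\ref{T: small=ACC} (whose proof is independent of this corollary), so every subvariety of $\mathbf N$ satisfies both the ACC and the DCC. The entire content of the corollary is that the \emph{join} with $\mathbf M(xzxyty)$ is large even though each factor separately is harmless, so the witness must live in the join without living in either factor. The paper's proof does exactly this: it cites Proposition~3.2 of \cite{Gusev-23} for the membership $M(\mathbf c_{0,0,1}[\varepsilon])\in\mathbf M(xzxyty)\vee\mathbf N$ and then invokes Proposition~\ref{P: non-small}(iii), not part~(i).
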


\begin{proof}
It is shown in the proof of Proposition~3.2 in~\cite{Gusev-23} that $M(\mathbf c_{0,0,1}[\varepsilon])\in \mathbf M(xzxyty)\vee \mathbf N$, where $\varepsilon\in S_1$.
Now Proposition~\ref{P: non-small}(iii) applies.
\end{proof}

\subsection{Identities defining varieties that satisfy $\Psi_1\cup\Psi_3$}
\label{subsec: identities defining varieties}

\begin{proposition}
\label{P: Psi_1,Psi_3 subvarieties}
Each variety satisfying $\Psi_1\cup\Psi_3$ can be defined by the identities in $\Psi_1\cup\Psi_3$ together with some of the following identities:
\begin{equation}
\label{one letter in a block}
x^{e_0}\biggl(\prod_{i=1}^r t_ix^{e_i}\biggr) \approx x^{f_0}\biggl(\prod_{i=1}^r t_ix^{f_i}\biggr),
\end{equation}
where $r,e_0,f_0,\dots,e_r,f_r\in\mathbb N_0$; and
\begin{equation}
\label{two letters in a block}
\biggl(\prod_{i=1}^k a_i^{g_i}t_i\biggr) x^py^q \biggl(\prod_{i=k+1}^{k+\ell} t_i a_i^{g_i}\biggr)\approx\biggl(\prod_{i=1}^k a_i^{g_i}t_i\biggr) y^qx^p  \biggl(\prod_{i=k+1}^{k+\ell} t_ia_i^{g_i}\biggr),
\end{equation}
where $k,\ell\in\mathbb N_0$, $p,q\in\mathbb N$, $g_1,\dots,g_{k+\ell}\in \mathbb N_0$ and $a_1,\dots,a_{k+\ell}\in\{x,y\}$.
\end{proposition}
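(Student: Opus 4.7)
The plan is to fix a variety $\mathbf V$ satisfying $\Psi_1\cup\Psi_3$, let $\Xi$ denote the set of all identities of forms~\eqref{one letter in a block} and~\eqref{two letters in a block} that hold in $\mathbf V$, and prove that every identity satisfied by $\mathbf V$ is deducible from $\Psi_1\cup\Psi_3\cup\Xi$; by Proposition~\ref{P: deduction}, this will yield the proposition. Since each identity in $\Xi$ holds in $\mathbf V$ by construction, the inclusion $\mathbf V\subseteq\var(\Psi_1\cup\Psi_3\cup\Xi)$ is automatic, and only the reverse direction is substantive.

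My first step would be to extract a few basic consequences of $\Psi_1\cup\Psi_3$ that apply to every identity $\mathbf u\approx\mathbf v$ of $\mathbf V$: namely $\alf(\mathbf u)=\alf(\mathbf v)$, $\simple(\mathbf u)=\simple(\mathbf v)$, $\mul(\mathbf u)=\mul(\mathbf v)$, and the simple letters appear in the same order in both words. Each of these follows by inspecting the shape of the identities in $\Psi_1\cup\Psi_3$, all of which have identical simple-letter skeletons and the same multiset of letters on both sides, so no substitution instance can disturb these features. Once this normalization is in place, $\mathbf u$ and $\mathbf v$ can both be written as $\mathbf w_0t_1\mathbf w_1\cdots t_r\mathbf w_r$ where $t_1,\dots,t_r$ are the common simple letters and the blocks $\mathbf w_i$ use only multiple letters; the task then reduces to transforming the block sequence of $\mathbf u$ into that of $\mathbf v$ by deductions from $\Psi_1\cup\Psi_3\cup\Xi$.

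I would then argue by induction on a suitable measure of the discrepancy between $\mathbf u$ and $\mathbf v$ (for instance, a weighted count of positions at which the two words disagree). For the inductive step, I would locate the leftmost disagreement and resolve it according to the letters involved: (a) a disagreement confined to the distribution of powers of a single multiple letter across the blocks is handled by a single instance of~\eqref{one letter in a block}; (b) a commutation of two multiple letters $x$ and $y$ inside a block, in a word whose blocks consist only of powers of $x$ and $y$, is handled by an instance of~\eqref{two letters in a block}; (c) a more general swap involving the first occurrence of some multiple letter with another multiple letter, in the presence of further multiple letters, is realized by an appropriate substitution instance of a $\Psi_3$-identity $\mathbf c_{k,\ell,m}[\rho]\approx\mathbf c_{k,\ell,m}^\prime[\rho]$ (or its ``$\mathbf d$''-dual); (d) the transfer of an isolated non-first occurrence of a letter past a separating substructure is handled by $\Psi_1$.

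The main obstacle, I expect, lies in case (c) together with the reduction of arbitrary within-block rearrangements to the rigid pairwise moves available from $\Psi_3$. In a block containing many distinct multiple letters, any permutation of their first occurrences must be decomposed into adjacent transpositions, and each such transposition must be realized by a concrete substitution into some $\mathbf c_{k,\ell,m}[\rho]\approx\mathbf c_{k,\ell,m}^\prime[\rho]$ that acts exactly on the intended pair while leaving the rest of the word fixed; choosing the parameters $k,\ell,m$ and the permutation $\rho$ so that the flanking $z_i$'s and $t_i$'s absorb all the surrounding material without introducing spurious changes is the delicate combinatorial core of the argument. Once these elementary moves are accounted for, the induction closes and the proposition follows.
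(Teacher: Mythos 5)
Your overall strategy --- normalize both sides of an arbitrary identity of $\mathbf V$, then induct on a discrepancy measure, discharging single-letter redistributions via~\eqref{one letter in a block}, island swaps via~\eqref{two letters in a block}, and the remaining moves via $\Psi_1\cup\Psi_3$ --- is the same as the paper's, but two of your steps have genuine gaps. The first is your normalization claim: it is not true that every identity $\mathbf u\approx\mathbf v$ of a variety satisfying $\Psi_1\cup\Psi_3$ has $\alf(\mathbf u)=\alf(\mathbf v)$, $\simple(\mathbf u)=\simple(\mathbf v)$, $\mul(\mathbf u)=\mul(\mathbf v)$ and the simple letters in the same order. Inspecting the shape of the identities in $\Psi_1\cup\Psi_3$ only controls identities \emph{deducible} from that set, whereas $\mathbf V$ is an arbitrary variety satisfying it and may satisfy far more (for instance $xy\approx yx$, $x\approx x^2$, or $x\approx 1$; every commutative variety and the trivial variety satisfy $\Psi_1\cup\Psi_3$). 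The paper disposes of the commutative case separately via Head's description, and in the non-commutative case derives the alignment of decompositions from the fact that $xy$ is then an isoterm (a cited lemma, not an inspection of $\Psi_1\cup\Psi_3$); even then the per-block occurrence counts need not agree and are equalized using identities of the form~\eqref{one letter in a block} themselves, namely the restrictions $\mathbf v(x,t_1,\dots,t_m)\approx\mathbf v'(x,t_1,\dots,t_m)$.

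The second gap is that you defer exactly the point on which the proposition turns. What you call the ``delicate combinatorial core'' is the dichotomy deciding when an adjacent-island swap $x^py^q\leftrightarrow y^qx^p$ inside a block follows from $\Psi_1\cup\Psi_3$ and when it must instead be imported as an axiom of the form~\eqref{two letters in a block}. The paper isolates this in Lemmas~\ref{L: from pxqxr to pqxxr} and~\ref{L: from pxyqxrys to pyxqxrys}: $\Psi_1$ first collapses each letter to at most one island per block, and then the swap is a consequence of $\Psi_3$ precisely when $x$ and $y$ both recur in a common later (or, dually, earlier) block, producing the pattern $\mathbf p\,xy\,\mathbf q\,x\,\mathbf r\,y\,\mathbf s$; when at most one of $x,y$ meets each other block, deleting everything except $x,y,t_1,\dots,t_m$ yields an identity of the form~\eqref{two letters in a block}, and the one-island normal form is what allows the deleted material to be substituted back so that this restricted identity really is equivalent to the swap in the full word. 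Without this criterion your cases (b) and (c) are a restatement of the problem rather than a proof, and without establishing the one-island normal form the re-substitution step in case (b) does not go through.
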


To prove Proposition~\ref{P: Psi_1,Psi_3 subvarieties}, we need two auxiliary results and some definitions.

\begin{lemma}
\label{L: from pxqxr to pqxxr}
If $\mathbf w:=\mathbf px\mathbf qx\mathbf r$ and $\alf(\mathbf q)\subseteq\mul(\mathbf w)$, then the set $\Psi_1$ of identities implies the identity $\mathbf w\approx\mathbf p\mathbf qx^2\mathbf r$.
\end{lemma}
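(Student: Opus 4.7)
I argue by induction on $|\mathbf{q}|$. The base case $|\mathbf{q}|=0$ is trivial since both $\mathbf{w}$ and $\mathbf{p}\mathbf{q}x^2\mathbf{r}$ coincide syntactically with $\mathbf{p}x^2\mathbf{r}$. For the inductive step, write $\mathbf{q}=c\mathbf{q}'$ where $c$ is the first letter of $\mathbf{q}$. The key reduction is to establish the one-letter swap
\[
\mathbf{p}xc\mathbf{q}'x\mathbf{r} \;\approx\; \mathbf{p}cx\mathbf{q}'x\mathbf{r}
\]
from $\Psi_1$, and then invoke the inductive hypothesis on the new triple $(\mathbf{p}c,\mathbf{q}',\mathbf{r})$. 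The hypothesis applies because $\alf(\mathbf{q}')\subseteq\alf(\mathbf{q})\subseteq\mul(\mathbf{w})$ and $\mul$ depends only on the letter-multiset of the whole word, which is preserved by the swap; it yields $\mathbf{p}cx\mathbf{q}'x\mathbf{r}\approx\mathbf{p}c\mathbf{q}'x^2\mathbf{r}=\mathbf{p}\mathbf{q}x^2\mathbf{r}$, completing the step.

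To produce the swap, I exploit $c\in\mul(\mathbf{w})$: the letter $c$ has a further occurrence in $\mathbf{w}$, lying in $\mathbf{p}$, $\mathbf{q}'$, or $\mathbf{r}$. In each case I would select a suitable identity $\mathbf{a}_{k,\ell}[\rho]\approx\mathbf{a}_{k,\ell}^\prime[\rho]\in\Psi_1$ (with $k,\ell\ge 1$) and construct a substitution $\varphi\colon\mathcal{X}\to\mathcal{X}^\ast$ so that a single direct deduction realizes the swap. The natural construction sends $\varphi(x)=x$, sends one $z_i$ to $c$, sends the remaining $z_j$'s either to letters of $\mathbf{q}'$ that come paired with external occurrences in $\mathbf{r}$ or to the empty word, and uses the $t_j$'s to absorb the leftover portions of $\mathbf{p}$, $\mathbf{q}'$, and $\mathbf{r}$; outer wedges $\mathbf{a},\mathbf{b}$ of the direct deduction pick up what remains of $\mathbf{p}$ and $\mathbf{r}$. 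Collapsing some $z_j$'s and $t_j$'s to the empty word has the effect of making $\mathbf{a}_{k,\ell}[\rho]$ behave like one of the ``missing'' degenerate identities $\mathbf{a}_{1,0}$ or $\mathbf{a}_{0,1}$, which themselves are not in $\Psi_1$ but arise as consequences of its members through such substitutions.

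The central technical difficulty is the subcase in which all of $c$'s additional occurrences lie inside $\mathbf{q}'$, so $c$ has no external partner in $\mathbf{p}\cup\mathbf{r}$. A direct application with $\varphi(z_1)=c$ then fails, because the copy of $z_1$ in the side-frame $\mathbf{A}$ or $\mathbf{B}$ of $\mathbf{a}_{k,\ell}[\rho]$ would force $c$ to appear in $\mathbf{p}\cup\mathbf{r}$ after substitution, contrary to assumption. I would handle this by anchoring the identity at a different multiple letter of $\mathbf{q}$ that does have an external partner; in the extreme subcase where every letter of $\mathbf{q}$ is purely internal, I would resort to a role-reversing substitution $\varphi(x)=c$ that treats the two $c$-occurrences inside $\mathbf{q}$ as the two ``$x$''-occurrences of the identity, and then invoke the inductive hypothesis on the resulting intermediate word. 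Arranging things so that the chain of applications actually lands at $\mathbf{p}\mathbf{q}x^2\mathbf{r}$ and not at some permuted variant is where the argument demands the most care.
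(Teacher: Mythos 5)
Your plan hinges on establishing the one-letter swap $\mathbf p\,xc\,\mathbf q'x\mathbf r\approx\mathbf p\,cx\,\mathbf q'x\mathbf r$ by ``a single direct deduction'' from some $\mathbf a_{k,\ell}[\rho]\approx\mathbf a_{k,\ell}^\prime[\rho]$, and this is where the proposal fails. The two sides of every identity in $\Psi_1$ differ only in the position of the first occurrence of $x$, and on the primed side that occurrence sits \emph{adjacent to the second one}, forming $x^2$. Hence a non-trivial direct deduction from $\Psi_1$ either merges two occurrences of the factor $\varphi(x)$ into $\varphi(x)^2$ or splits such a square; it can never deposit $\varphi(x)$ immediately before a nonempty block $\mathbf q'$ that does not begin with $\varphi(x)$. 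Concretely, take $\mathbf w=ct_1xcdxt_2d$ (so $\mathbf p=ct_1$, $\mathbf q=cd$, $\mathbf r=t_2d$, and the hypothesis holds). The only word obtainable from $\mathbf w$ in one direct deduction from $\Psi_1$ is $ct_1cdx^2t_2d$ (the full jump with $\varphi(x)=x$); the swapped word $ct_1cxdxt_2d$ is not reachable in one step, since $\varphi(x)=c$ or $\varphi(x)=d$ would require anchoring the simple letters $t_1$ or $t_2$. Allowing several steps does not rescue the scheme: given your inductive hypothesis for $\mathbf q'$, the identity $\mathbf p\,cx\,\mathbf q'x\mathbf r\approx\mathbf p c\mathbf q'x^2\mathbf r$ is already available, so deriving the swap is \emph{equivalent} to deriving the target $\mathbf w\approx\mathbf p\mathbf q x^2\mathbf r$ for $\mathbf q$ itself --- the induction on $|\mathbf q|$ makes no progress. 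Finally, the subcase you flag as the central difficulty (all further occurrences of $c$ lying inside $\mathbf q$, e.g.\ $\mathbf w=bsxabax$) is left explicitly unresolved in your write-up.

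The workable argument avoids local swaps altogether. If every letter of $\mathbf q$ is simple in $\mathbf q$ and $x\notin\alf(\mathbf q)$, each letter of $\mathbf q$ is anchored in $\mathbf p$ or $\mathbf r$, and a \emph{single} application of a suitable $\mathbf a_{n,m}[\rho]\approx\mathbf a_{n,m}^\prime[\rho]$ carries the first $x$ over the whole of $\mathbf q$ at once. When $\mathbf q$ has repeated letters, one first transports the non-last occurrences of each such letter to the left of the first $x$ using substitutions with $\varphi(x)=c$ (for $bsxabax$: $bsxabax\approx bsxbaax\approx bsaxbax$), then performs the single long jump over the now fully anchored remainder, and finally restores the original order of $\mathbf q$; internal occurrences of $x$ in $\mathbf q$ are handled by splitting $\mathbf q$ at them and iterating. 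To repair your proof you would need to replace the one-letter swap by this two-phase scheme.
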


\begin{proof}
Two cases are possible.

\smallskip

\noindent\textit{Case }1: $x\notin \alf(\mathbf q)$. 
If $\mul(\mathbf q)=\emptyset$, then, since $\alf(\mathbf q)\subseteq\mul(\mathbf w)$, one can find $n,m\in\mathbb N_0$ and $\rho\in S_{n+m}$ such that the length of $\mathbf q$ equals $n+m$ and $\mathbf a_{n,m}[\rho]\approx \mathbf a_{n,m}^\prime[\rho]$ implies $\mathbf w\approx\mathbf p\mathbf qx^2\mathbf r$.
If $\mul(\mathbf q)\ne\emptyset$, then the similar argument shows that $\Psi_1$ implies the identities 
\[
\mathbf w=\mathbf px\mathbf qx\mathbf r\approx \mathbf p\mathbf q^{\prime}x\mathbf q^{\prime\prime}x\mathbf r\approx \mathbf p\mathbf q^{\prime}\mathbf q^{\prime\prime}x^2\mathbf r
\approx \mathbf p\mathbf qx^2\mathbf r,
\]
where $\mathbf q^{\prime}$ [respectively, $\mathbf q^{\prime\prime}$] is a word obtained from $\mathbf q$ by retaining the non-last [respectively, last] occurrence of each letter.

\smallskip

\noindent\textit{Case }2: $x\in \alf(\mathbf q)$. 
In this case, $\mathbf q$ can be represented as follows: $\mathbf q=\mathbf q_0x\mathbf q_1\cdots x\mathbf q_r$, where $r\in\mathbb N$ and $x\notin\alf(\mathbf q_0\cdots \mathbf q_r)$.
Then the argument from Case~1 shows that the identities 
\[
\mathbf w=\mathbf px\mathbf q_0\biggl(\prod_{i=1}^r x\mathbf q_i\biggr)x\mathbf r \approx \mathbf p\mathbf q_0x^2\mathbf q_1\biggl(\prod_{i=2}^r x\mathbf q_i\biggr)x\mathbf r\approx \dots\approx \mathbf p\mathbf q_0\biggl(\prod_{i=1}^r x\mathbf q_i\biggr)x^2\mathbf r=\mathbf p\mathbf qx^2\mathbf r
\]
follow from $\Psi_1$, as required.
\end{proof}

\begin{lemma}
\label{L: from pxyqxrys to pyxqxrys}
If $\mathbf w:=\mathbf pxy\mathbf qx\mathbf ry\mathbf s$ and $\alf(\mathbf r)\subseteq\mul(\mathbf w)$, then, for $j=1,2$, the set 
\begin{equation}
\label{set of identities}
\Psi_j\cup
\left\{
\mathbf c_{n,m,k}[\rho]\approx\mathbf c_{n,m,k}^\prime[\rho]
\mid
n,m,k\in\mathbb N,\,
\rho\in S_{n+m+k}
\right\}
\end{equation} 
of identities implies the identity $\mathbf w\approx\mathbf pyx\mathbf qx\mathbf ry\mathbf s$.
\end{lemma}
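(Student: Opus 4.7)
The plan is to mirror the case structure of Lemma~\ref{L: from pxqxr to pqxxr}, with the identities $\mathbf c_{n,m,k}[\tau]\approx\mathbf c_{n,m,k}^\prime[\tau]$ (available in this lemma) taking over the role played there by $\mathbf a_{n,m}[\rho]\approx\mathbf a_{n,m}^\prime[\rho]$. After preparing the word using $\Psi_j$, the swap $\mathbf pxy\mathbf qx\mathbf ry\mathbf s\approx\mathbf pyx\mathbf qx\mathbf ry\mathbf s$ will be obtained by a single direct deduction from an identity $\mathbf c_{n,m,k}[\tau]\approx\mathbf c_{n,m,k}^\prime[\tau]$: one reads off $n,m,k,\tau$ and a substitution $\varphi$ with $\varphi(x)=x$, $\varphi(y)=y$ so that $\mathbf w=\mathbf a\varphi(\mathbf c_{n,m,k}[\tau])\mathbf b$ and the target equals $\mathbf a\varphi(\mathbf c_{n,m,k}^\prime[\tau])\mathbf b$ for appropriate $\mathbf a,\mathbf b$.

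First I would reduce to the case where $x,y\notin\alf(\mathbf r)$ by splitting $\mathbf r$ at each occurrence of $x$ or $y$ and handling the swap block by block, in the spirit of Case~2 in the proof of Lemma~\ref{L: from pxqxr to pqxxr}. Next, still assuming $x,y\notin\alf(\mathbf r)$, I would reduce to the case in which every letter of $\mathbf r$ is simple in $\mathbf r$: if some letter is repeated in $\mathbf r$, write $\mathbf r=\mathbf r^\prime\mathbf r^{\prime\prime}$ conceptually, with $\mathbf r^\prime$ retaining non-last occurrences of each repeated letter and $\mathbf r^{\prime\prime}$ retaining only last occurrences, and transport $\mathbf r^\prime$ out of the critical window using $\Psi_j$ together with Lemma~\ref{L: from pxqxr to pqxxr}. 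This leaves a simple $\mathbf r^{\prime\prime}$ in the $\mathbf r$-position.

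In the main step, with $\mathbf r$ simple and $x,y\notin\alf(\mathbf r)$, every letter of $\mathbf r$ (being multiple in $\mathbf w$) admits a designated \emph{anchor} occurrence in $\mathbf p\mathbf q\mathbf s$; counting anchors in $\mathbf p$, $\mathbf q$, $\mathbf s$ yields integers $n,m,k$ with $n+m+k=|\alf(\mathbf r)|$. Enumerating $\alf(\mathbf r)=\{z_1,\ldots,z_{n+m+k}\}$ so that the anchors appear in index order in their respective subwords, letting $\tau\in S_{n+m+k}$ record the order of the letters inside $\mathbf r$, and taking $\varphi(z_i)$ to be the anchor letter together with each $\varphi(t_i)$ (and $\varphi(t)$) equal to the intervening factor, one finds that the prefix of $\mathbf p$ before the first anchor and the suffix of $\mathbf s$ after the last anchor are absorbed into $\mathbf a$ and $\mathbf b$, and the deduction goes through. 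Boundary configurations in which one of $n,m,k$ would be zero are subsumed by first substituting the relevant $z_i\mapsto 1$ and $t_i\mapsto 1$ in an identity from the available set with all three indices at least $1$.

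The main obstacle I anticipate is the second reduction (to simple $\mathbf r$): one must verify that transporting each non-last occurrence out of the window can actually be carried out using only identities from $\Psi_j\cup\{\mathbf c_{k,\ell,m}[\rho]\approx\mathbf c_{k,\ell,m}^\prime[\rho]\mid k,\ell,m\in\mathbb N,\,\rho\in S_{k+\ell+m}\}$, while preserving both invariants $x,y\notin\alf(\mathbf r)$ and $\alf(\mathbf r)\subseteq\mul(\mathbf w)$ at every intermediate step. The choice $j=1$ versus $j=2$ simply dictates whether the reshuffling migrates leftward (past the first $x$) or rightward (past the second $y$), and the two arguments are dually symmetric.
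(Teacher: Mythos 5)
Your proposal is correct and follows essentially the same route as the paper: reduce to $x,y\notin\alf(\mathbf r)$, then to $\mathbf r$ having no repeated letters by transporting the non-last (for $j=1$) occurrences out of the window via $\Psi_j$ and Lemma~\ref{L: from pxqxr to pqxxr} (or its dual), and finally apply a single identity $\mathbf c_{n,m,k}[\tau]\approx\mathbf c_{n,m,k}^\prime[\tau]$ via the anchor-based substitution you describe. If anything, your explicit construction of $n,m,k,\tau$ and your handling of the boundary cases where an index would be $0$ are more detailed than the paper's one-line assertion of that step.
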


\begin{proof}
Clearly, we may assume without any loss that $x,y\notin\alf(\mathbf r)$.
If $\mul(\mathbf r)=\emptyset$, then, since $\alf(\mathbf r)\subseteq\mul(\mathbf w)$, one can find $n,m,k\in\mathbb N_0$ and $\rho\in S_{n+m+k}$ such that the length of $\mathbf r$ equals $n+m+k$ and $\mathbf c_{n,m,k}[\rho]\approx\mathbf c_{n,m,k}^\prime[\rho]$ implies $\mathbf w\approx\mathbf pyx\mathbf qx\mathbf ry\mathbf s$.
Let now $\mul(\mathbf r)\ne\emptyset$.
If $j=1$, then, in view of the above and Lemma~\ref{L: from pxqxr to pqxxr}, the set~\eqref{set of identities} implies the identities 
\[
\begin{aligned}
\mathbf w=\mathbf pxy\mathbf qx\mathbf ry\mathbf s\approx \mathbf pxy\mathbf q\mathbf r^{\prime}x\mathbf r^{\prime\prime}y\mathbf s\approx \mathbf pyx\mathbf q\mathbf r^{\prime}x\mathbf r^{\prime\prime}y\mathbf s\approx \mathbf pyx\mathbf qx\mathbf ry\mathbf s,
\end{aligned}
\]
where $\mathbf r^{\prime}$ [respectively, $\mathbf r^{\prime\prime}$] is a word obtained from $\mathbf r$ by retaining the non-last [respectively, last] occurrence of each letter.
If $j=2$, then, using the statement dual to Lemma~\ref{L: from pxqxr to pqxxr} instead of Lemma~\ref{L: from pxqxr to pqxxr}, we can show that the set~\eqref{set of identities} implies the identity $\mathbf w\approx\mathbf pyx\mathbf qx\mathbf ry\mathbf s$.
\end{proof}

Let $\mathbf w$ be a word with $\simple(\mathbf w)=\{t_1,\dots,t_m\}$. 
We may assume without loss of generality that $\mathbf w(t_1,\dots,t_m)=t_1\cdots t_m$. 
Then $\mathbf w=\mathbf w_0t_1\mathbf w_1\cdots t_m\mathbf w_m$ for some words $\mathbf w_0,\dots,\mathbf w_m$. 
The words $\mathbf w_0,\dots, \mathbf w_m$ are called \textit{blocks} of the word $\mathbf w$. 
The representation of the word $\mathbf w$ as a product of alternating simple letters and blocks is called \textit{decomposition} of $\mathbf w$.

If $\mathbf w \in \mathcal X^\ast$ and $x \in \alf(\mathbf w)$, then an \textit{island} formed by $x$ in $\mathbf w$ is a maximal subword of $\mathbf w$, which is a power of $x$. 

\begin{proof}[Proof of Proposition~\ref{P: Psi_1,Psi_3 subvarieties}]
We denote by $\Sigma$ the set of all identities of form~\eqref{one letter in a block}, where $r,e_0,f_0,\dots,e_r,f_r\in\mathbb N_0$, and of the form~\eqref{two letters in a block}, where $k,\ell\in\mathbb N_0$, $p,q\in\mathbb N$, $g_1,\dots,g_{k+\ell}\in \mathbb N_0$ and $a_1,\dots,a_{k+\ell}\in\{x,y\}$.
Let $\mathbf V$ be a subvariety of 
\[
\mathbf O:=\var\{\Psi_1,\,\Psi_3\}.
\]
Take an arbitrary identity $\mathbf u \approx \mathbf u^\prime$ of $\mathbf V$.
We need to verify that $\mathbf u \approx \mathbf u^\prime$ is equivalent within $\mathbf O$ to some subset of $\Sigma$.

If the variety $\mathbf V$ is commutative, then it follows from~\cite{Head-68} that $\mathbf u \approx \mathbf u^\prime$ is equivalent to a subset of $\{x^p\approx x^q,\,xy\approx yx\mid p,q\in\mathbb N\}\subseteq \Sigma$, and we are done. 
So, we may further assume that $\mathbf V$ is non-commutative.
In this case, $xy$ is an isoterm for $\mathbf V$ by Lemma~2.4 in~\cite{Gusev-23}.

Let $\mathbf u_0t_1\mathbf u_1\cdots t_m\mathbf u_m$ be the decomposition of $\mathbf u$. 
Since $xy$ is an isoterm for $\mathbf V$, the decomposition of $\mathbf u^\prime$ has the form $\mathbf u_0^\prime t_1\mathbf u_1^\prime\cdots t_m\mathbf u_m^\prime$.
According to Lemma~\ref{L: from pxqxr to pqxxr}, the identities in $\Psi_1$ can be used to convert the words $\mathbf u$ and $\mathbf u^\prime$ into some words $\mathbf v$ and $\mathbf v^\prime$, respectively, such that the following hold:
\begin{itemize}
\item the decompositions of $\mathbf v$ and $\mathbf v^\prime$ are of the form $\mathbf v_0t_1\mathbf v_1\cdots t_m\mathbf v_m$ and $\mathbf v_0^\prime t_1\mathbf v_1^\prime\cdots t_m\mathbf v_m^\prime$, respectively;
\item each of the words $\mathbf v_i$ and $\mathbf v_i^\prime$ has at most one island form by $x$ for any $x\in\mathcal X$ and $i=0,\dots,m$.
\end{itemize}
In view of this fact, it suffices to show that the identity $\mathbf v \approx \mathbf v^\prime$ is equivalent within $\mathbf O$ to some subset of $\Sigma$.
Further, the set 
\begin{equation}
\label{set of rigid identities}
\{\mathbf v(x,t_1,\dots,t_m)\approx \mathbf v^\prime(x,t_1,\dots,t_m)\mid x\in \mul(\mathbf v)=\mul(\mathbf v^\prime)\}
\end{equation}
of identities can be used to convert the words $\mathbf v$ and $\mathbf v^\prime$ into some words $\mathbf w$ and $\mathbf w^\prime$, respectively, such that the following hold:
\begin{itemize}
\item the decompositions $\mathbf w$ and $\mathbf w^\prime$ have the form $\mathbf w_0t_1\mathbf w_1\cdots t_m\mathbf w_m$ and $\mathbf w_0^\prime t_1\mathbf w_1^\prime\cdots t_m\mathbf w_m^\prime$, respectively;
\item each of the words $\mathbf w_i$ and $\mathbf w_i^\prime$ has at most one island form by $x$ for any $x\in\mathcal X$ and $i=0,\dots,m$;
\item $\occ_x(\mathbf w_i)=\occ_x(\mathbf w_i^\prime)$ for any $x\in\mathcal X$ and $i=0,\dots,m$.
\end{itemize}
Evidently, every identity from~\eqref{set of rigid identities} is of the form~\eqref{one letter in a block}.
Hence $\mathbf O\{\mathbf u \approx \mathbf u^\prime\}=\mathbf O\{\Gamma,\,\mathbf w\approx \mathbf w^\prime\}$ for some $\Gamma\subseteq\Sigma$.
In view of this fact, it remains to show that the identity $\mathbf w \approx \mathbf w^\prime$ is equivalent within $\mathbf O$ to a subset of $\Sigma$.

We call an identity $\mathbf c\approx\mathbf d$ 1-\textit{invertible} if $\mathbf c=\mathbf e^\prime\, xy\,\mathbf e^{\prime\prime}$ and $\mathbf d=\mathbf e^\prime\, yx\,\mathbf e^{\prime\prime}$ for some words $\mathbf e^\prime,\mathbf e^{\prime\prime}\in\mathcal X^\ast$ and letters $x,y\in\alf(\mathbf e^\prime\mathbf e^{\prime\prime})$. 
Let $j>1$. 
An identity $\mathbf c\approx\mathbf d$ is called $j$-\textit{invertible} if there is a sequence of words $\mathbf c=\mathbf w_0,\dots,\mathbf w_j=\mathbf d$ such that the identity $\mathbf w_i\approx\mathbf w_{i+1}$ is 1-invertible for each $i=0,\dots,j-1$ and $j$ is the least number with such a property. 
For convenience, we will call the trivial identity 0-\textit{invertible}. 

Notice that the identity $\mathbf w \approx \mathbf w^\prime$ is $r$-invertible for some $r\in\mathbb N_0$ because $\occ_x(\mathbf w_i)=\occ_x(\mathbf w_i^\prime)$ for any $x\in\mathcal X$ and $i=0,\dots,m$. 
We will use induction by $r$.

\smallskip

\textit{Induction base}. 
If $r=0$, then $\mathbf w=\mathbf w^\prime$, whence $\mathbf V\{\mathbf w\approx\mathbf w^\prime\}=\mathbf V\{\emptyset\}$.

\smallskip

\textit{Induction step}. 
Let $r>0$. 
Obviously, $\mathbf w_s\ne\mathbf w_s^\prime$ for some $s\in\{0,\dots,m\}$. 
Then there are letters $x$ and $y$ such that $\mathbf w_s=\mathbf a_s\, y^qx^p\,\mathbf b_s$ for some $p,q\in\mathbb N$ and $\mathbf a_s,\mathbf b_s\in\mathcal X^\ast$ with $x,y\notin\alf(\mathbf a_s\mathbf b_s)$, while the island $x^p$ precedes the island $y^q$ in $\mathbf w_s^\prime$. 
We denote by $\hat{\mathbf w}$ the word obtained from $\mathbf w$ by swapping of the islands $x^p$ and $y^q$ in the block $\mathbf w_s$. 

To complete the proof, it suffices to show that $\mathbf w \approx \hat{\mathbf w}$ holds in $\mathbf O\{\mathbf w \approx \mathbf w^\prime\}$ and $\mathbf O\{\mathbf w \approx \hat{\mathbf w}\}=\mathbf O\Gamma^\prime$ for some $\Gamma^\prime\subseteq\Sigma$.
Indeed, in this case, the identity $\hat{\mathbf w}\approx\mathbf w^\prime$ is \mbox{$(r-1)$}-invertible. 
By the induction assumption, $\mathbf O\{\hat{\mathbf w}\approx\mathbf w\}=\mathbf O\Gamma^{\prime\prime}$ for some $\Gamma^{\prime\prime}\subseteq\Sigma$, whence  
\[
\mathbf O\{\mathbf w\approx\mathbf w^\prime\}=\mathbf O\{\mathbf w\approx \hat{\mathbf w},\hat{\mathbf w}\approx \mathbf w^\prime\}=\mathbf O\{\Gamma^\prime,\,\Gamma^{\prime\prime}\},
\] and we are done.

If $x,y\in\alf(\mathbf w_{s^\prime})=\alf(\mathbf w_{s^\prime})$ for some $s^\prime\ne s$, then Lemma~\ref{L: from pxyqxrys to pyxqxrys} or the statement dual to it implies that $\mathbf O$ satisfies the identity $\mathbf w\approx\hat{\mathbf w}$. 
Thus, we may further assume that at most one of the letters $x$ and $y$ occurs in $\alf(\mathbf w_i)$ for any $i\ne s$.
In this case, it is easy to see that the identity $\mathbf w(\mathcal T)\approx \mathbf w^\prime(\mathcal T)$, where $\mathcal T:=\{x,y,t_1,\dots,t_m\}$, coincides (up to renaming of letters) with some identity of the form~\eqref{two letters in a block} in $\Sigma$.
Since each of the letters $x$ and $y$ forms at most one island in any block of $\mathbf w$ and $\mathbf w^\prime$, we have
\[
\mathbf O\{\mathbf w(\mathcal T)\approx \mathbf w^\prime(\mathcal T)\}=\mathbf O\{\mathbf w\approx \hat{\mathbf w}\},
\]
and we are done.
\end{proof}

\subsection{Exclusion identities for a number of certain varieties}
\label{subsec: exclusion identities}

\begin{lemma}
\label{L: satisfies a_{n+m}[rho]=a_n^{n+m}[rho] old}
Let $\mathbf V$ be a monoid variety satisfying $xyzxy\approx yxzxy\approx xyzyx$ and $x^2y\approx yx^2$ such that $M(xyx)\in\mathbf V$.
If, for any $(n,m)\in\hat{\mathbb N}_0^2$ and $\rho\in S_{n,m}$, the monoid $M(\mathbf a_{n,m}[\rho])$ does not lie in $\mathbf V$, then $\mathbf V$ satisfies $\Psi_1\cup\Psi_2$.
\end{lemma}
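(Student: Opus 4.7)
The plan is to derive, for each $k,\ell\in\mathbb N$ and each $\rho\in S_{k+\ell}$, both identities $\mathbf a_{k,\ell}[\rho]\approx\mathbf a_{k,\ell}^\prime[\rho]$ and $\mathbf a_{k,\ell}[\rho]\approx\mathbf a_{k,\ell}^{\prime\prime}[\rho]$, thereby establishing $\mathbf V\models\Psi_1\cup\Psi_2$. The overall strategy is to extract a ``seed'' identity from each alternating case $(n,m)\in\hat{\mathbb N}_0^2$, $\tau\in S_{n,m}$, and then propagate it to all $(k,\ell,\rho)$ using the identities $\alpha_1\colon xyzxy\approx yxzxy$, $\alpha_2\colon xyzxy\approx xyzyx$ and $\beta\colon x^2y\approx yx^2$ supplied by hypothesis.

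First I would harness the non-isoterm hypothesis. By Lemma~\ref{L: M(W) in V}, since $M(\mathbf a_{n,m}[\tau])\notin\mathbf V$, the word $\mathbf a_{n,m}[\tau]$ is not an isoterm for $\mathbf V$, so some non-trivial identity $\mathbf a_{n,m}[\tau]\approx\mathbf v$ holds in $\mathbf V$. The assumption $M(xyx)\in\mathbf V$ ensures $xyx$ is an isoterm for $\mathbf V$; together with the fact that the simple letters $t_1,\dots,t_{n+m}$ must retain their relative order, this forces $\mathbf v$ to share with $\mathbf a_{n,m}[\tau]$ the block decomposition separated by the $t_i$, the same occurrence count for every multiple letter, and the same bracketing pattern of the $z_i$ around the two $x$'s.

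Second I would pin down the form of $\mathbf v$. Each $z_i$ has exactly one occurrence inside the middle block between the two $x$'s and one occurrence adjacent to its own $t_i$, so the non-triviality of $\mathbf a_{n,m}[\tau]\approx\mathbf v$ must come from a rearrangement inside the middle block or from a relocation of the two $x$'s. Applying $\alpha_1$ and $\alpha_2$ to swap neighbouring $z$'s in the middle block (using the bracketing by $x$) and $\beta$ to transport $x^2$ once the two $x$'s have been brought together, I would show that $\mathbf v$ is equivalent, modulo $\alpha_1,\alpha_2,\beta$, to either $\mathbf a_{n,m}^\prime[\tau]$ or $\mathbf a_{n,m}^{\prime\prime}[\tau]$. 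The alternating property of $\tau\in S_{n,m}$ plays an essential role here, guaranteeing that an $x$ can actually be pushed past every adjacent $z$ in the middle block.

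Third I would bootstrap from the alternating case to arbitrary $(k,\ell,\rho)$. For a non-alternating $\rho$, locate two adjacent positions $i,i+1$ whose $\rho$-images lie in the same half $\{1,\dots,k\}$ or $\{k+1,\dots,k+\ell\}$, and apply $\alpha_1$ or $\alpha_2$ to swap those two adjacent $z$'s in the middle block of $\mathbf a_{k,\ell}[\rho]$; once both $x$'s have been collected together by means of the seed identities, $\beta$ shows the block $x^2$ commutes freely with every intervening letter. Suitable substitution instances of the seed identities (for some $(n,m)\in\hat{\mathbb N}_0^2$ with an alternating $\tau$) then reassemble the desired identity for $\rho$; a straightforward induction on the number of ``bad'' adjacencies of $\rho$ finishes both $\Psi_1$ and $\Psi_2$.

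The main obstacle is the second step: starting from a wholly unspecified non-trivial identity $\mathbf a_{n,m}[\tau]\approx\mathbf v$ and extracting a $\Psi_1$- or $\Psi_2$-type identity using only the isoterm $xyx$ and the identities $\alpha_1,\alpha_2,\beta$. This demands delicate combinatorial bookkeeping of the positions of the two $x$'s relative to the $z$'s in the middle block and a careful use of the alternating property of $\tau$ to ensure the $x$'s can be transported all the way to one side of the middle block.
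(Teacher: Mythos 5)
Your high-level skeleton matches the paper's: first obtain the ``seed'' identities $\mathbf a_{n,m}[\rho]\approx \mathbf a_{n,m}^\prime[\rho]\approx \mathbf a_{n,m}^{\prime\prime}[\rho]$ for all alternating data $(n,m)\in\hat{\mathbb N}_0^2$, $\rho\in S_{n,m}$, and then deduce the general identities in $\Psi_1\cup\Psi_2$. But the paper disposes of the seed step by citing an external result (Lemma~4.8 of~\cite{Gusev-23}), whereas you attempt to prove it from scratch and then explicitly leave the core of that attempt unexecuted: your ``second step'' is precisely where all the mathematical content lives, and you acknowledge it only as an obstacle requiring ``delicate combinatorial bookkeeping.'' Moreover, the preparatory claims you make for that step are not justified by the hypotheses. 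That $M(xyx)\in\mathbf V$ makes $xyx$ an isoterm does not force an identity $\mathbf a_{n,m}[\tau]\approx\mathbf v$ to preserve occurrence counts of multiple letters (the monoid $M(xyx)$ itself satisfies $x^2\approx x^3$, since $x^2$ is not a subword of $xyx$), and asserting that $\mathbf v$ has ``the same bracketing pattern of the $z_i$ around the two $x$'s'' is essentially assuming the conclusion you are trying to reach.

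Your third step also misdescribes the reduction from arbitrary $(k,\ell,\pi)$ to the alternating case. The identities $xyzxy\approx yxzxy$ and $xyzxy\approx xyzyx$ cannot be used to swap two adjacent letters of the middle block of $\mathbf a_{k,\ell}[\pi]$ that come from the same half: each of these identities requires the pair being swapped to occur \emph{adjacently at both of its occurrence sites}, whereas the other occurrences of two first-half letters $z_i,z_j$ sit in the prefix separated by the simple letters $t_i$ (and dually for the suffix), so no substitution instance applies. The reduction the paper has in mind goes the other way and needs none of these identities: given an arbitrary $\pi\in S_{k+\ell}$, pad the middle block with dummy letters of the opposite half (each accompanied by its own $t$) until the arrangement alternates, obtaining some $\mathbf a_{n,m}[\rho]$ with $\rho\in S_{n,m}$; the general identity is then the image of the seed identity under the substitution sending the dummy letters and their $t$'s to the empty word. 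Finally, note that once either $\mathbf a_{n,m}[\rho]\approx\mathbf a_{n,m}^\prime[\rho]$ or $\mathbf a_{n,m}[\rho]\approx\mathbf a_{n,m}^{\prime\prime}[\rho]$ is obtained, the other follows from $x^2y\approx yx^2$; you gesture at this but never state it, and it is needed to get all of $\Psi_1\cup\Psi_2$ rather than just one of the two families.
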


\begin{proof}
By~\cite[Lemma~4.8]{Gusev-23}, the variety $\mathbf V$ satisfies the identities $\mathbf a_{n,m}[\rho]\approx \mathbf a_{n,m}^\prime[\rho]\approx \mathbf a_{n,m}^{\prime\prime}[\rho]$ for any $(n,m)\in\hat{\mathbb N}_0^2$ and $\rho\in S_{n,m}$.
It is easy to see that if $k,\ell\in\mathbb N_0$ and $\pi\in S_{k+\ell}$, then there are $(n,m)\in\hat{\mathbb N}_0^2$ and $\rho\in S_{n,m}$ such that $\mathbf a_{k,\ell}[\pi]\approx \mathbf a_{k,\ell}^\prime[\pi]\approx \mathbf a_{k,\ell}^{\prime\prime}[\pi]$ is a consequence of $\mathbf a_{n,m}[\rho]\approx \mathbf a_{n,m}^\prime[\rho]\approx \mathbf a_{n,m}^{\prime\prime}[\rho]$.
The lemma is thus proved.
\end{proof}

For any $n,m\in\mathbb N_0$, $\rho\in S_{n+m}$ and $0\le p\le q\le n+m$, we put
\[
\mathbf a_{n,m}^{p,q}[\rho]:=\biggl(\prod_{i=1}^n z_it_i\biggr)\biggl(\prod_{i=1}^p z_{i\rho}\biggr)x\biggl(\prod_{i=p+1}^q z_{i\rho}\biggr)x\biggl(\prod_{i=q+1}^{n+m} z_{i\rho}\biggr)\biggl(\prod_{i=n+1}^{n+m} t_iz_i\biggr).
\]
Notice that $\mathbf a_{n,m}^{0,n+m}[\rho]=\mathbf a_{n,m}[\rho]$, $\mathbf a_{n,m}^{n+m,n+m}[\rho]=\mathbf a_{n,m}^{\prime}[\rho]$ and $\mathbf a_{n,m}^{0,0}[\rho]=\mathbf a_{n,m}^{\prime\prime}[\rho]$.

\begin{lemma}
\label{L: satisfies a_{n+m}[rho]=a_n^{n+m}[rho]}
Let $\mathbf V$ be a monoid variety satisfying the identities $xyzxy\approx yxzxy\approx xyzyx$ such that $M(x^2,xyx)\in\mathbf V$.
If, for any $(n,m)\in\hat{\mathbb N}_0^2$ and $\rho\in S_{n,m}$, the monoid $M(\mathbf a_{n,m}[\rho])$ does not lie in $\mathbf V$, then $\mathbf V$ satisfies either $\Psi_1$ or $\Psi_2$.
\end{lemma}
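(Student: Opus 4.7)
The plan is to follow the spirit of Lemma~\ref{L: satisfies a_{n+m}[rho]=a_n^{n+m}[rho] old} while accommodating the weaker hypothesis $M(x^2,xyx)\in\mathbf V$ (which replaces $x^2y\approx yx^2$ together with $M(xyx)\in\mathbf V$). I fix an arbitrary $(n,m)\in\hat{\mathbb N}_0^2$ and $\rho\in S_{n,m}$. Since $M(\mathbf a_{n,m}[\rho])\notin\mathbf V$, Lemma~\ref{L: M(W) in V} yields a non-trivial identity $\mathbf a_{n,m}[\rho]\approx\mathbf w$ of $\mathbf V$. The first task is to pin down $\mathbf w$. Because $M(x^2,xyx)\in\mathbf V$, both $x^2$ and $xyx$ are isoterms for $\mathbf V$: the former forces $x$ to occur exactly twice in $\mathbf w$, the latter forbids the simple letters $t_i$ of $\mathbf a_{n,m}[\rho]$ from sliding between the two occurrences of $x$. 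Combined with the identities $xyzxy\approx yxzxy\approx xyzyx$ and with Lemma~\ref{L: swapping in linear-balanced}, one concludes that $\mathbf w=\mathbf a_{n,m}^{p,q}[\rho]$ for some $0\le p\le q\le n+m$ with $(p,q)\ne(0,n+m)$.

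Next I would iterate this identity: applying the $xyzxy\approx yxzxy$ rule (substituted so that the leftmost $x$ plays the role of the first letter) pushes the leftmost $x$ one step further to the right through the middle block $\prod z_{i\rho}$, and applying $xyzxy\approx xyzyx$ symmetrically pushes the rightmost $x$ one step further to the left. Re-invoking the base analysis on each intermediate word and exploiting $(p,q)\ne(0,n+m)$, one obtains for each $(n,m,\rho)$ either $\mathbf a_{n,m}[\rho]\approx\mathbf a_{n,m}^\prime[\rho]$ or $\mathbf a_{n,m}[\rho]\approx\mathbf a_{n,m}^{\prime\prime}[\rho]$ in $\mathbf V$.

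The decisive step is to show that the choice of direction is \emph{globally consistent}. I would argue by contradiction: if $\mathbf V$ satisfies neither $\Psi_1$ nor $\Psi_2$, pick witnesses $(k_1,\ell_1,\rho_1)$ and $(k_2,\ell_2,\rho_2)$ for the failures of $\Psi_1$ and $\Psi_2$ respectively. From these two witnesses I would synthesise parameters $(N,M)\in\hat{\mathbb N}_0^2$ and $\tau\in S_{N,M}$ so that every non-trivial identity $\mathbf a_{N,M}[\tau]\approx\mathbf w^\ast$ potentially satisfied by $\mathbf V$ would, via suitable letter-identification substitutions, project either to a violated instance of $\Psi_1$ (at parameters $(k_1,\ell_1,\rho_1)$) or to a violated instance of $\Psi_2$ (at $(k_2,\ell_2,\rho_2)$). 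Hence no such non-trivial identity exists, $\mathbf a_{N,M}[\tau]$ is an isoterm for $\mathbf V$, and $M(\mathbf a_{N,M}[\tau])\in\mathbf V$ by Lemma~\ref{L: M(W) in V}, contradicting the hypothesis.

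The hard part will be this last synthesis: explicitly constructing the triple $(N,M,\tau)$ while respecting the constraints $|N-M|\le 1$ and $\tau\in S_{N,M}$, and making sure both projection substitutions exist simultaneously. Without $x^2y\approx yx^2$, the usual trick of freely commuting the block $x^2$ past inserted letters is unavailable, so the construction must be arranged purely through the isoterm conditions on $x^2$ and $xyx$, the swap identities $xyzxy\approx yxzxy\approx xyzyx$, and careful bookkeeping with $(n,m)$-permutations. I anticipate a small case analysis according to the values of $n-m$, $k_1-\ell_1$, $k_2-\ell_2\in\{-1,0,1\}$ to settle the parity of the permutation $\tau$.
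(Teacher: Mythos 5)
Your architecture (pin the right-hand side down to $\mathbf a_{n,m}^{p,q}[\rho]$, obtain a left/right dichotomy, then prove the choice of direction is globally consistent) resembles the paper's, but the two steps that carry all the weight are not carried out, and one of them would fail as described. The ``pushing'' step does not work: to apply $xyzxy\approx yxzxy$ or $xyzxy\approx xyzyx$ by direct deduction you need the images of the two swapped letters to occur \emph{adjacently twice} in the word, and $x$ and $z_{1\rho}$ do not have this configuration in $\mathbf a_{n,m}[\rho]$ (the second occurrence of $z_{1\rho}$ sits in the prefix or suffix, far from the second $x$). The paper instead argues by induction on $n+m$: it deletes letters from $\mathbf a_{n,m}^{p,q}[\rho]$ to get a shorter word of the form $\mathbf a_{c,d}[\pi]$, applies the induction hypothesis to merge the two occurrences of $x$, and then uses the base instances $xyxty\approx yx^2ty$, $ytxyx\approx ytyx^2$ to push the island $x^2$ to the end of the block. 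Moreover, to conclude $\mathbf w=\mathbf a_{n,m}^{p,q}[\rho]$ at all you must freeze the letters other than $x$; the paper gets this from $xzxyty$ being an isoterm and handles the case $M(xzxyty)\notin\mathbf V$ separately via $\sigma_3$. Your appeal to Lemma~\ref{L: swapping in linear-balanced} is not available, since its hypothesis $M(xt_1x\cdots t_nx)\in\mathbf V$ is not granted here.

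The decisive gap is the consistency step, which you explicitly leave as a plan (``I would synthesise parameters\dots''). This is the heart of the lemma. The paper settles it \emph{before} any induction, at the level $n+m\le 2$: it shows $\mathbf V$ satisfies either $\{xyxty\approx yx^2ty,\ ytxyx\approx ytyx^2\}$ or $\{xyxty\approx x^2yty,\ ytxyx\approx ytx^2y\}$, because otherwise one of the monoids $M(xyxty)$, $M(ytxyx)$, $M(z_1t_1xz_2z_1xt_2z_2)$, $M(z_1t_1xz_1z_2xt_2z_2)$ would lie in a join of two subvarieties of $\mathbf V$ and hence in $\mathbf V$, and each of these words is, up to renaming, some $\mathbf a_{n,m}[\rho]$ with $\rho\in S_{n,m}$ --- contradicting the hypothesis. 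Once the direction is fixed at the base, the induction only ever produces identities of that one type, so no separate global argument is needed. Until you exhibit an explicit construction playing this role (your single isoterm $\mathbf a_{N,M}[\tau]$, or the paper's four join computations), the proof is incomplete precisely at the point you yourself identify as the hard part.
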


\begin{proof}
If the variety $\mathbf V$ satisfies the identity $x^2y\approx yx^2$, then the required claim follows from Lemma~\ref{L: satisfies a_{n+m}[rho]=a_n^{n+m}[rho] old}.
So, we assume below that $\mathbf V$ violates the identity $x^2y\approx yx^2$.
Then by the condition of the lemma and Lemma~\ref{L: M(W) in V}, the words $xyx$, $x^2y$ and $yx^2$ are isoterms for $\mathbf V$.

First, verify that $\mathbf V$ satisfies either 
\[
\hat{\Psi}_1:=\{xyxty\approx yx^2ty,\,ytxyx\approx ytyx^2\} \ \text{ or } \ \hat{\Psi}_2:=\{xyxty\approx x^2yty,\,ytxyx\approx ytx^2y\}.
\]
Arguing by contradiction, suppose that $\mathbf V$ violates some identity in $\hat{\Psi}_1$ and some identity in $\hat{\Psi}_2$.
Since $M(xyxty)\notin\mathbf V$, it follows from Lemma~\ref{L: M(W) in V} that $\mathbf V$ satisfies a non-trivial identity $xyxty\approx \mathbf w$ for some $\mathbf w\in \mathcal X^\ast$. 
Since the words $x^2y$ and $xyx$ are isoterms for $\mathbf V$,  $\mathbf w(y,t)=yty$, $\mathbf w(x,t)=x^2t$.
This is only possible when $\mathbf w\in\{x^2yty,yx^2ty\}$.
Thus, $\mathbf V$ satisfies one of the identities $xyxty\approx x^2yty$ or $xyxty\approx yx^2ty$.
By a similar argument, we can show that $\mathbf V$ satisfies one of the identities $ytxyx\approx ytyx^2$ or $ytxyx\approx ytx^2y$, and
\begin{itemize}
\item one of the identities $yx^2ty\approx xyxty$ or $yx^2ty\approx x^2yty$ whenever $M(yx^2ty)\notin \mathbf V$;
\item one of the identities $ytyx^2\approx ytxyx$ or $ytyx^2\approx ytx^2y$ whenever $M(ytyx^2)\notin \mathbf V$.
\end{itemize}
It follows that
\begin{itemize}
\item if $M(yx^2ty)\notin \mathbf V$, then $xyxty\approx yx^2ty$ holds in $\mathbf V$;
\item  if $M(ytyx^2)\notin \mathbf V$, then $ytxyx\approx ytyx^2$ holds in $\mathbf V$.
\end{itemize}
Therefore, since $\mathbf V$ violates some identity in $\hat{\Psi}_1$, either $M(yx^2ty)\in \mathbf V$ or $M(ytyx^2)\in \mathbf V$.
By a similar argument, we can deduce from the assumption that $\mathbf V$ violates some identity in $\hat{\Psi}_2$ that either $M(x^2yty)\in \mathbf V$ or $M(ytx^2y)\in \mathbf V$.
However, this contradicts the condition of the lemma because of one of the following routinely checked claims:
\begin{itemize}
\item the monoid $M(xyxty)$ belongs to the variety $\mathbf M(yx^2ty)\vee\mathbf M(x^2yty)$;
\item the monoid $M(z_1t_1xz_2z_1xt_2z_2)$ belongs to the variety $\mathbf M(yx^2ty)\vee\mathbf M(ytx^2y)$;
\item the monoid $M(z_1t_1xz_1z_2xt_2z_2)$ belongs to the variety $\mathbf M(ytyx^2)\vee\mathbf M(x^2yty)$;
\item the monoid $M(ytxyx)$ belongs to the variety $\mathbf M(ytyx^2)\vee\mathbf M(ytx^2y)$.
\end{itemize}
Thus, we have proved that $\mathbf V$ satisfies either $\hat{\Psi}_1$ or $\hat{\Psi}_2$. 
By symmetry, we may assume that $\hat{\Psi}_1$ holds in $\mathbf V$.

In view of the above, we have proved that there exists $r\in\mathbb N$ such that $\mathbf V$ satisfies the identity $\mathbf a_{r_1,r_2}[\rho]\approx \mathbf a_{r_1,r_2}^\prime[\rho]$ for all $(r_1,r_2)\in\hat{\mathbb N}_0^2$ and $\rho\in S_{r_1,r_2}$ with $r_1+r_2\le r$ (for instance, $r=1$). 
We are going to verify that an arbitrary $r$ possesses this property. 
Arguing by contradiction, we suppose that the mentioned claim is true for $r=1,2,\dots,k-1$ but is false for $r=k$.
Then $\mathbf V$ violates $\mathbf a_{n,m}[\rho]\approx\mathbf a_{n,m}^\prime[\rho]$ for some $(n,m)\in\hat{\mathbb N}_0^2$ and $\rho\in S_{n,m}$ such that $n+m=k$.

If $M(xzxyty)\notin \mathbf V$, then $\mathbf V$ satisfies $\sigma_3$ by Lemma~\ref{L: swapping in linear-balanced}.
Then we have a contradiction with the choice of $n,m$ and $\rho$ because
\[
\mathbf a_{n,m}[\rho]=\mathbf px\mathbf qx\mathbf r\stackrel{\sigma_3}\approx\mathbf px\mathbf a\mathbf bx\mathbf r\stackrel{\sigma_3}\approx\mathbf p\mathbf q_1x^2\mathbf q_2\mathbf r\stackrel{\hat{\Psi}_1,\sigma_3}\approx\mathbf p\mathbf q_1\mathbf q_2x^2\mathbf r\stackrel{\sigma_3}\approx\mathbf px^2\mathbf q\mathbf r=\mathbf a_{n,m}^\prime[\rho],
\]
where
\[
\begin{aligned}
&\mathbf p:=\biggl(\prod_{i=1}^n z_it_i\biggr),\ \mathbf q:=\biggl(\prod_{i=1}^{n+m} z_{i\rho}\biggr),\  \mathbf r:=\biggl(\prod_{i=n+1}^{n+m} t_iz_i\biggr),\\ 
&\mathbf q_1:=\mathbf q(z_1,\dots,z_n)\ \text{ and }\ \mathbf q_2:=\mathbf q(z_{n+1},\dots,z_{n+m}).
\end{aligned}
\] 
So, we may assume that $M(xzxyty)\in \mathbf V$.

According to Lemma~\ref{L: M(W) in V}, $\mathbf V$ satisfies a non-trivial identity $\mathbf a_{n,m}[\rho] \approx \mathbf a$.
Further, since $xzxyty$ is an isoterm for $\mathbf V$ by Lemma~\ref{L: M(W) in V}, we have $\mathbf a_x=(\mathbf a_{n,m}[\rho])_x$.
It follows from the fact that $xyx$, $x^2y$ and $yx^2$ are isoterms for $\mathbf V$ that $\mathbf a(x,t_n,t_{n+1})= t_nx^2t_{n+1}$.
Since the identity $\mathbf a_{n,m}[\rho]\approx \mathbf a$ is non-trivial, we may assume without any loss that $\mathbf a=\mathbf a_{n,m}^{p,q}[\rho]$ for some $0<p\le q\le n+m$.
Let 
\[
Z:=\{z_{1\rho},t_{1\rho},\dots,z_{p\rho},t_{p\rho},z_{(q+1)\rho},t_{(q+1)\rho},\dots,z_{k\rho},t_{k\rho}\}.
\]
Clearly, $(\mathbf a_{n,m}[\rho])_Z$ coincides (up to renaming of letters) with $\mathbf a_{c,d}[\pi]$ for some $(c,d)\in\hat{\mathbb N}_0^2$ and $\pi \in S_{c,d}$ such that $c+d=q-p$.
Since $\mathbf a_{c,d}[\pi]\approx \mathbf a_{c,d}^\prime[\pi]$ holds in the variety $\mathbf V$, this variety must satisfy $\mathbf a=\mathbf a_{n,m}^{p,q}[\rho] \approx \mathbf a_{n,m}^{q,q}[\rho]$. 
If $q<n+m$, then the set
\[
\Psi_1\cup\{\mathbf a_{n,m}[\rho](x,z_i,t_i)\approx \mathbf a_{n,m}^{q,q}(x,z_i,t_i)\mid(q+1)\rho\le i\le k\rho\}
\]
implies the identity $\mathbf a_{n,m}^{q,q}[\rho]\approx \mathbf a_{n,m}^{n+m,n+m}[\rho]=\mathbf a_{n,m}^{\prime}[\rho]$.
Therefore, $\mathbf V$ satisfies the identity $\mathbf a_{n,m}[\rho]\approx \mathbf a_{n,m}^{\prime}[\rho]$, contradicting the choice of $n,m$ and $\rho$.
So, we have proved that $\mathbf a_{n,m}[\rho]\approx \mathbf a_{n,m}^{n+m,n+m}[\rho]=\mathbf a_{n,m}^{\prime}[\rho]$ holds in $\mathbf V$ for any $(n,m)\in\hat{\mathbb N}_0^2$ and $\rho\in S_{n,m}$.
 
Finally, since, for any $s,s^\prime\in\mathbb N_0$ and $\pi\in S_{s+s^\prime}$, there are $(n,m)\in\hat{\mathbb N}_0^2$ and $\rho\in S_{n,m}$ such that $\mathbf a_{n,m}[\rho]\approx \mathbf a_{n,m}^\prime[\rho]$ implies $\mathbf a_{s,s^\prime}[\pi]\approx \mathbf a_{s,s^\prime}^\prime[\pi]$, the set $\Psi_1$ is satisfied by $\mathbf V$.
The lemma is proved.
\end{proof}

\begin{corollary}
\label{C: satisfies a_{n+m}[rho]=a_n^{n+m}[rho]}
Let $\mathbf V$ be a monoid variety satisfying $xyzxy\approx yxzxy\approx xyzyx$ and $\Phi_k$ for some $k\in\mathbb N$ such that $M(xyx)\in\mathbf V$.
If, for any $(n,m)\in\hat{\mathbb N}_0^2$ and $\rho\in S_{n,m}$, the monoid $M(\mathbf a_{n,m}[\rho])$ does not lie in $\mathbf V$, then $\mathbf V$ satisfies either $\Psi_1$ or $\Psi_2$.
\end{corollary}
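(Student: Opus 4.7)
My plan is to split on whether the word $x^2$ is an isoterm for $\mathbf V$ and reduce each case to one of the preceding results, with $\Phi_k$ playing a role only in the case where $x^2$ fails to be an isoterm. If $x^2$ is an isoterm, then together with the hypothesis $M(xyx)\in\mathbf V$, Lemma~\ref{L: M(W) in V} yields $M(x^2,xyx)\in\mathbf V$, so all hypotheses of Lemma~\ref{L: satisfies a_{n+m}[rho]=a_n^{n+m}[rho]} are satisfied and the conclusion follows at once.

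In the remaining case, $x^2$ is not an isoterm for $\mathbf V$, and Lemma~\ref{L: x^n is an isoterm} provides an identity $x^2\approx x^m$ in $\mathbf V$ for some $m>2$. My plan is to combine this identity with $\Phi_k$ to derive $x^2y\approx yx^2$, after which Lemma~\ref{L: satisfies a_{n+m}[rho]=a_n^{n+m}[rho] old} applies and yields in fact the full $\Psi_1\cup\Psi_2$. To produce $x^2y\approx yx^2$, I first iterate $x^2\approx x^m$ until reaching $x^2\approx x^N$ for some $N\ge k$; the identity $x^k\approx x^{k+1}$ from $\Phi_k$ then extends upward to $x^N\approx x^{N+1}$, and chaining these gives $x^2\approx x^N\approx x^{N+1}\approx x\cdot x^2\approx x^3$. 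Hence every power $x^j$ with $j\ge 2$ equals $x^2$ in $\mathbf V$; in particular, when $k\ge 2$ we have $x^k\approx x^2$, and combining with $x^ky\approx yx^k$ from $\Phi_k$ delivers $x^2y\approx yx^2$. The degenerate case $k=1$ forces $\mathbf V$ to be commutative, so both $\Psi_1$ and $\Psi_2$ hold trivially.

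The only substantive step is the arithmetic collapse from $x^2\approx x^m$ down to $x^2\approx x^3$ via $\Phi_k$; this is precisely what lets us weaken the hypothesis $M(x^2,xyx)\in\mathbf V$ of Lemma~\ref{L: satisfies a_{n+m}[rho]=a_n^{n+m}[rho]} to the weaker pair of hypotheses $M(xyx)\in\mathbf V$ together with $\Phi_k$. Everything else is a routine invocation of the two previously established lemmas.
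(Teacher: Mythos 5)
Your proposal is correct and follows essentially the same route as the paper: the paper also splits on whether $M(x^2)\in\mathbf V$ (equivalently, whether $x^2$ is an isoterm for $\mathbf V$), invokes Lemma~\ref{L: satisfies a_{n+m}[rho]=a_n^{n+m}[rho]} in the first case, and in the second case derives $x^2\approx x^3$ together with $x^2y\approx yx^2$ (i.e., $\Phi_2$) so that Lemma~\ref{L: satisfies a_{n+m}[rho]=a_n^{n+m}[rho] old} applies. Your write-up merely spells out the arithmetic collapse from $x^2\approx x^m$ to $x^2\approx x^3$, which the paper leaves as an easy deduction.
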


\begin{proof}
If $M(x^2)\in \mathbf V$, then the required claim follows from Lemma~\ref{L: satisfies a_{n+m}[rho]=a_n^{n+m}[rho]}.
If $M(x^2)\notin \mathbf V$, then one can easily deduce from Lemma~\ref{L: M(W) in V} that $\mathbf V$ satisfies $x^2\approx x^3$.
In this case, $\Phi_2$ holds in $\mathbf V$.
Now Lemma~\ref{L: satisfies a_{n+m}[rho]=a_n^{n+m}[rho] old} applies.
\end{proof}

\section{Proof of the main results}
\label{sec: proof}

To start with, we establish the following auxiliary result, which is the ``common part'' of the proofs of Theorems~\ref{T: small=ACC} and~\ref{T: DCC}

\begin{lemma}
\label{L: ACC or DCC}
Let $\mathbf V$ be a subvariety of $\mathbf A_\mathsf{cen}$ that satisfies either the ACC or the DCC.
Then $\mathbf V$ is contained in one of the varieties $\mathbf P_n$, $\mathbf P_n^\delta$, $\mathbf Q_n$ or $\mathbf Q_n^\delta$ for some $n\in\mathbb N$.
\end{lemma}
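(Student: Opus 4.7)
The plan is to exploit Proposition~\ref{P: non-small} and the hypothesis that $\mathbf V$ satisfies at least one chain condition in order to channel $\mathbf V$ into one of the four identity packages $\{\Phi_k,\Psi_1,\Psi_3\}$, $\{\Phi_k,\Psi_2,\Psi_3\}$, $\{\Phi_k,\sigma_2,\sigma_3\}$ or $\{\Phi_k,\sigma_1,\sigma_3\}$ defining $\mathbf P_k$, $\mathbf P_k^\delta$, $\mathbf Q_k$ and $\mathbf Q_k^\delta$ respectively. The containment $\mathbf V\subseteq\mathbf A_\mathsf{cen}$ yields a $k\in\mathbb N$ with $\Phi_k$ holding in $\mathbf V$, and the chain-condition hypothesis together with Proposition~\ref{P: non-small} and Lemma~\ref{L: M(W) in V} immediately imply that no $\mathbf a_{n,m}[\rho]$ (with $(n,m)\in\hat{\mathbb N}_0^2$ and $\rho\in S_{n,m}$), no $\mathbf c_{n,m,0}[\rho]$, and no $\mathbf c_{n,m,n+m+1}[\rho]$ is an isoterm for $\mathbf V$.

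Next, I would verify the hypotheses of Corollary~\ref{C: satisfies a_{n+m}[rho]=a_n^{n+m}[rho]}. The degenerate case $M(xyx)\notin\mathbf V$ forces $\mathbf V$ to be essentially commutative and sits inside $\mathbf P_1$ trivially. Otherwise, I would derive the auxiliary identities $xyzxy\approx yxzxy\approx xyzyx$ from the non-isoterm conditions on the smallest $\mathbf a_{n,m}[\rho]$ combined with $\Phi_k$, by substituting away simple letters and invoking Lemma~\ref{L: M(W) in V} along with the fact that $xy$ remains an isoterm for any non-commutative subvariety. Corollary~\ref{C: satisfies a_{n+m}[rho]=a_n^{n+m}[rho]} then yields that $\mathbf V$ satisfies either $\Psi_1$ or $\Psi_2$, and by the symmetry $\mathbf V\leftrightarrow\mathbf V^\delta$ I may assume without loss of generality that $\Psi_1$ holds.

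For the second coordinate I would split on whether $xzxyty$ is an isoterm for $\mathbf V$. If it is not, Lemma~\ref{L: swapping in linear-balanced} applied with $n=k$ supplies $\sigma_3$; the non-isoterm conditions for the $\mathbf c$-words combined with $\sigma_3$ and $\Psi_1$ then force $\sigma_1$, placing $\mathbf V$ inside $\mathbf Q_k^\delta$ (the $\Psi_2$ alternative giving $\mathbf Q_k$ symmetrically). If, on the other hand, $M(xzxyty)\in\mathbf V$, then Corollary~\ref{C: non-small} together with the chain-condition hypothesis rules out $\mathbf N\subseteq\mathbf V$, and the non-isoterm conditions for $\mathbf c_{n,m,0}[\rho]$ and $\mathbf c_{n,m,n+m+1}[\rho]$ should yield the full set $\Psi_3$ via an argument paralleling the proof of Corollary~\ref{C: satisfies a_{n+m}[rho]=a_n^{n+m}[rho]} but applied to the $\mathbf c$-family. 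Combined with $\Psi_1$ and $\Phi_k$ this gives $\mathbf V\subseteq\mathbf P_k$.

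The principal obstacle is this last derivation: the excerpt supplies no off-the-shelf corollary for the $\mathbf c$-family parallel to Corollary~\ref{C: satisfies a_{n+m}[rho]=a_n^{n+m}[rho]}, so extracting $\Psi_3$ will require a standalone isoterm-bookkeeping argument showing that any non-trivial identity $\mathbf c_{n,m,\ell}[\rho]\approx\mathbf w$ satisfied by $\mathbf V$ must coincide with $\mathbf c_{n,m,\ell}[\rho]\approx\mathbf c_{n,m,\ell}^\prime[\rho]$ or its right-to-left reverse, using the isoterm status of $xzxyty$ and reduction lemmas in the spirit of Lemmas~\ref{L: from pxqxr to pqxxr} and~\ref{L: from pxyqxrys to pyxqxrys}. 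A secondary subtlety is matching the $\Psi_1$-versus-$\Psi_2$ choice from the first step with the $\mathbf P$-versus-$\mathbf Q$ choice from the second step so that the surviving identities actually coexist inside one of the four target varieties.
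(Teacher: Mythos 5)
Your overall strategy (exclude the monoids of Proposition~\ref{P: non-small} via the chain-condition hypothesis, then convert the exclusions into the identity packages defining $\mathbf P_n$, $\mathbf P_n^\delta$, $\mathbf Q_n$, $\mathbf Q_n^\delta$) is the same as the paper's, but there are concrete gaps. First, the case $M(xyx)\notin\mathbf V$ is handled incorrectly: the failure of $xyx$ to be an isoterm does not force $\mathbf V$ to be commutative, nor does it place $\mathbf V$ in $\mathbf P_1$ (for instance, $\mathbf M(xy)$ is non-commutative, does not contain $M(xyx)$, and violates $xy\approx yx$, so it is not in $\mathbf P_1$). The correct conclusion here, obtained by pinning down the identity $xyx\approx x^sy x^t$ using $\Phi_n$ and Lemma~\ref{L: x^n is an isoterm}, is that $\mathbf V$ satisfies $x^2y\approx xyx$, which implies $\sigma_2$ and $\sigma_3$ and hence $\mathbf V\subseteq\mathbf Q_n$. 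Second, the identities $xyzxy\approx yxzxy\approx xyzyx$ required to invoke Corollary~\ref{C: satisfies a_{n+m}[rho]=a_n^{n+m}[rho]} cannot come from the non-isoterm status of the $\mathbf a_{n,m}[\rho]$ (which concern a single multiple letter $x$ between blocks); they belong to the $\mathbf c$/$\mathbf d$-family, so your first step already presupposes part of what you defer to the second step.

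The more serious issue is the part you yourself flag as ``the principal obstacle'': the derivation of $\Psi_3$. The paper does not attack this head-on with an isoterm-bookkeeping argument for the $\mathbf c$-family; instead it runs a trichotomy on whether $\mathbf N\subseteq\mathbf V$, $\mathbf N^\delta\subseteq\mathbf V$, or neither. In the first two cases it concludes $\mathbf V\subseteq\mathbf Q_n$ (resp.\ $\mathbf Q_n^\delta$) directly from $M(xtyzxy),M(xzxyty)\notin\mathbf V$ via Lemma~\ref{L: swapping in linear-balanced}, and only in the third case does it derive $\Psi_3$, by appealing to Lemma~4.9 of~\cite{Gusev-23} and its dual, whose applicability rests precisely on having excluded \emph{both} $\mathbf N$ and $\mathbf N^\delta$. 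Your branch ``$M(xzxyty)\in\mathbf V$'' rules out $\mathbf N\subseteq\mathbf V$ via Corollary~\ref{C: non-small} but says nothing about $\mathbf N^\delta$, so the dual half of $\Psi_3$ (the $\mathbf d$-identities) is not reachable by your route as written; and in the branch ``$M(xzxyty)\notin\mathbf V$'' your claim that the $\mathbf c$-exclusions force $\sigma_1$ specifically (rather than $\sigma_2$) is unjustified --- which of the two holds depends on which of $\mathbf N$, $\mathbf N^\delta$ sits inside $\mathbf V$. This also dissolves your ``secondary subtlety'': the paper never has to match a $\Psi_1$-versus-$\Psi_2$ choice against a $\sigma$-choice, because the $\mathbf Q$-cases are disposed of before the $\mathbf a$-exclusions are ever used, and in the remaining case the conclusion is simply ``$\mathbf V\subseteq\mathbf P_n$ or $\mathbf V\subseteq\mathbf P_n^\delta$''.
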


\begin{proof}
It is well known that any subvariety of $\mathbf A_\mathsf{cen}$ satisfies $\Phi_n$ for some $n\in \mathbb N$  (see~\cite{Straubing-82} or~\cite[Theorem~7.2.5]{Almeida-94}).
Let $n$ be the least number such that $\mathbf V$ satisfies $\Phi_n$.
If $n=1$, then $\mathbf V\subseteq\mathbf P_1$, and we are done.
So, we may further assume that $n>1$.

Suppose that $M(xyx)\notin\mathbf V$.
According to Lemma~\ref{L: M(W) in V}, the variety $\mathbf V$ satisfies a non-trivial identity $xyx\approx \mathbf v$.
Since $n>1$, Lemma~\ref{L: x^n is an isoterm} implies that $x$ is an isoterm for $\mathbf V$.
Then $\mathbf v=x^syx^t$, where either $s\ge 2$ or $t\ge 2$.
By symmetry, we may assume that $s\ge 2$.
If $n=2$, then $\mathbf V$ satisfies $x^2y\approx xyx$ because
$$
xyx\approx x^syx^t \stackrel{x^2\approx x^3}\approx x^2yx^t \stackrel{x^2y\approx yx^2}\approx yx^{2+t}\stackrel{x^2\approx x^3}\approx yx^2 \stackrel{x^2y\approx yx^2}\approx x^2y.
$$
If $n>2$, then $(s,t)=(2,0)$ by Lemma~\ref{L: x^n is an isoterm}.
We see that the identity $x^2y\approx xyx$ is satisfied by $\mathbf V$ in either case.
Since the latter identity implies the identities $\sigma_2$ and $\sigma_3$, we have $\mathbf V\subseteq\mathbf Q_n$. 
So, we may further assume that $M(xyx)\in\mathbf V$.
Three cases are possible.

\smallskip

\noindent\textit{Case }1: $\mathbf N\subseteq\mathbf V$.
It is shown in~\cite[Theorem~1.1]{Gusev-19} that the variety $\mathbf M(xtyzxy)\vee\mathbf N$ does not satisfy both the ACC and the DCC.
Therefore, $M(xtyzxy)\notin \mathbf V$.
According to Corollary~\ref{C: non-small}, the variety $\mathbf M(xzxyty)\vee\mathbf N$ also violates both the ACC and the DCC.
Therefore, $M(xzxyty)\notin \mathbf V$.
Now Lemma~\ref{L: swapping in linear-balanced} applies, yielding that $\mathbf V$ satisfies $\sigma_2$ and $\sigma_3$.
Hence $\mathbf V\subseteq \mathbf Q_n$.

\smallskip

\noindent\textit{Case }2: $\mathbf N^\delta\subseteq\mathbf V$. 
In this case, the arguments dual to ones from Case~1 can show that $\mathbf V\subseteq \mathbf Q_n^\delta$.

\smallskip

\noindent\textit{Case }3: $\mathbf N,\mathbf N^\delta\nsubseteq\mathbf V$.
According to Proposition~\ref{P: non-small}(ii),(iii) and the statement dual to it, the variety $\mathbf V$ does not contain the monoids 
$$
M(\mathbf c_{n,m,n+m+1}[\pi]),\ M(\mathbf d_{n,m,n+m+1}[\pi]),\ M(\mathbf c_{n,m,0}[\tau])\ \text{ and }\ M(\mathbf d_{n,m,0}[\tau])
$$
for all $n,m\in\mathbb N_0$, $\pi\in S_{n+m,n+m+1}$ and $\tau\in S_{n+m}$.
Then Lemma~4.9 in~\cite{Gusev-23} and the dual to it imply that $\mathbf V$ satisfies $\Psi_3$.
Further, $M(\mathbf a_{n,m}[\rho])\notin\mathbf V$ for any $(n,m)\in\mathbb N_0^2$ and $\rho\in S_{n,m}$ by Proposition~\ref{P: non-small}(i).
Now Corollary~\ref{C: satisfies a_{n+m}[rho]=a_n^{n+m}[rho]} applies, yielding that $\mathbf V$ satisfies either $\Psi_1$ or $\Psi_2$.
Hence either $\mathbf V\subseteq\mathbf P_n$ or $\mathbf V\subseteq\mathbf P_n^\delta$.
\end{proof}

\begin{proof}[Proof of Theorem~\ref{T: DCC}]
\textit{Necessity} follows from Lemma~\ref{L: ACC or DCC}. 

\smallskip

\textit{Sufficiency}.
By symmetry, it suffices to show that the varieties $\mathbf P_n$ and $\mathbf Q_n$ satisfy the DCC.
It follows from~\cite[Theorem~11.1]{Lee-23} that the variety $\mathbf Q_n$ is \textit{hereditary finitely based} in the sense that every its subvariety can be defined by a finite set of identities.
This implies that $\mathbf Q_n$ satisfies the DCC.

Now we are going to show that the variety $\mathbf P_n$ satisfies the DCC as well.
To do this, it suffices to verify that every subvariety $\mathbf V$ of $\mathbf P_n$ may be defined within $\mathbf P_n$ by a finite set of identities. 
We will employ the general result by Mikhail Volkov~\cite{Volkov-90}. 
According to Proposition~\ref{P: Psi_1,Psi_3 subvarieties}, $\mathbf V$ is defined within $\mathbf P_n$ by some set $\Gamma_1$ of identities of the form~\eqref{one letter in a block}, where $r,e_0,f_0,\dots,e_r,f_r\in\mathbb N_0$, together with some set $\Gamma_2$ of identities of the form~\eqref{two letters in a block}, where $k,\ell\in\mathbb N_0$, $p,q\in\mathbb N$, $g_1,\dots,g_{k+\ell}\in \mathbb N_0$ and $a_1,\dots,a_{k+\ell}\in\{x,y\}$. 
According to Corollary~2 in~\cite{Volkov-90}, the set $\Gamma_1$ is equivalent to some finite set $\Gamma_1^\prime$ of identities.
Further, if at least one of the numbers $p,q,g_1,\dots,g_{k+\ell}$ is not less than $n$, then the identity~\eqref{two letters in a block} is a consequence of $\Phi_n$.
Since $\mathbf V$ satisfies $\Phi_n$, this allows us to assume that $p,q,g_1,\dots,g_{k+\ell}< n$ for each identity~\eqref{two letters in a block} in $\Gamma_2$.
Then both hand-sides of each identity in $\Gamma_2$ have the form $\mathbf w_0t_1\cdots t_r\mathbf w_r$, where
\[
\mathbf w_1,\dots, \mathbf w_r\in\mathcal W:=\{1,x^i,y^j,x^iy^j,y^jx^i\mid 1\le i,j<n\}.
\]
Since the set $\mathcal W$ is finite, we can apply the theorem and Observation~1 in~\cite{Volkov-90}, yielding that the set $\Gamma_2$ is equivalent to a finite set $\Gamma_2^\prime$ of identities.
Therefore, $\mathbf V=\mathbf P_n\{\Gamma_1^\prime,\Gamma_2^\prime\}$, as required.

\smallskip

Theorem~\ref{T: DCC} is thus proved.
\end{proof}

\begin{proof}[Proof of Theorem~\ref{T: small=ACC}]
The implication (i)~$\Rightarrow$~(ii) is obvious. 
It remains to prove the implications (ii)~$\Rightarrow$~(iii)~$\Rightarrow$~(i).

\smallskip

(ii) $\Rightarrow$ (iii). According to Lemma~\ref{L: ACC or DCC}, $\mathbf V$ is contained in one of the varieties $\mathbf P_n$, $\mathbf P_n^\delta$, $\mathbf Q_n$ or $\mathbf Q_n^\delta$ for some $n\in\mathbb N$. Further, in view of~\cite[Proposition~12.18]{Lee-23}, the variety $\mathbf M\left(\{xt_1x\cdots t_nx\mid n\in\mathbb N\}\right)$ violates the ACC. Hence $M\left(\{xt_1x\cdots t_nx\mid n\in\mathbb N\}\right)\notin \mathbf V$. Then, by~\cite[Lemma~13.11]{Lee-23}, the variety $\mathbf V$ satisfies the identity
\[
x\biggl(\prod_{i=1}^kt_ix\biggr)\approx x^{e_0}\biggl(\prod_{i=1}^kt_ix^{e_i}\biggr),
\]
where $k\ge n$ and $e_0,\dots, e_k \in\mathbb N_0$ with $e_j \ge n$ for some $j \in \{0,\dots, k\}$.
Since
\[
x^{e_0}\biggl(\prod_{i=1}^kt_ix^{e_i}\biggr)\stackrel{\Phi_n}\approx x^n\biggl(\prod_{i=1}^kt_i\biggr),
\]
this implies that $\mathbf V$ satisfies $\omega_k$. 
Hence $\mathbf V$ contains in one of the varieties $\mathbf R_k$, $\mathbf R_k^\delta$, $\mathbf S_k$ or $\mathbf S_k^\delta$.

\smallskip

(iii) $\Rightarrow$ (i). The variety $\mathbf S_n$ is small by~\cite[Corollary~12.12]{Lee-23}. 
It remains to verify that the variety $\mathbf R_n$ is small.
Evidently, if $g_0,\dots,g_r\in\mathbb N_0$ and $\sum_{i=0}^rg_i\ge n+1$, then 
\[
x^{g_0}\biggl(\prod_{i=0}^rt_ix^{g_i}\biggr)\stackrel{\Phi_n,\omega_n}\approx x^n\biggl(\prod_{i=0}^rt_i\biggr).
\]
It follows that every identity of the form~\eqref{one letter in a block} is equivalent modulo $\{\Phi_n,\omega_n\}$ to some identity of the same form with $\sum_{i=0}^re_i,\sum_{i=0}^rf_i\le n$.
Further, it is also evident that if $\occ_x(a_1^{g_1}\cdots a_{k+\ell}^{g_{k+\ell}})\ge n$ or $\occ_y(a_1^{g_1}\cdots a_{k+\ell}^{g_{k+\ell}})\ge n$, then the identity~\eqref{two letters in a block} is a consequence of $\omega_n$.
So, in view of the above and Proposition~\ref{P: Psi_1,Psi_3 subvarieties}, each subvariety of $\mathbf R_n$ can be defined within $\mathbf R_n$ by some of the following identities: \eqref{one letter in a block}, where $r,e_0,f_0,\dots,e_r,f_r\le n$ and $\sum_{i=0}^re_i,\sum_{i=0}^rf_i\le n$; and~\eqref{two letters in a block}, where $k,\ell\le 2n$, $g_1,\dots,g_{k+\ell}\le n$, $p,q\le n$ and $a_1,\dots,a_{k+\ell}\in\{x,y\}$.
Since there are only finitely many such identities, the variety $\mathbf R_n$ is small, as required.
\end{proof}

\small


\begin{thebibliography}{99}
\bibitem{Almeida-94} 
J. Almeida, \textit{Finite Semigroups and Universal Algebra}, World Scientific, Singapore, 1994

\bibitem{Evans-71}
T. Evans, The lattice of semigroup varieties, \textit{Semigroup Forum} \textbf{2} (1971), 1--43

\bibitem{Glasson-24a}
D. Glasson, Finitely and non-finitely related words, \textit{Semigroup Forum} \textbf{109} (2024), 347--374

\bibitem{Glasson-24b}
D. Glasson, The Rees quotient monoid $M(abba)$ generates a variety with uncountably many subvarieties, \textit{Semigroup Forum} \textbf{109} (2024), 476--481

\bibitem{Gusev-19}
S.V. Gusev, On the ascending and descending chain conditions in the lattice of monoid varieties, \textit{Siberian Electronic Math. Reports} \textbf{16} (2019), 983--997

\bibitem{Gusev-23}
S.V. Gusev, Varieties of aperiodic monoids with central idempotents whose subvariety lattice is distributive, \textit{Monatsh. Math.} \textbf{201} (2023), 79--108

\bibitem{Gusev-24}
S.V. Gusev, Minimal monoids generating varieties with complex subvariety lattices, \textit{Proc. Edinb. Math. Soc.} \textbf{67} (2024), 617--642

\bibitem{Gusev-Lee-20}
S.V. Gusev and E.W.H. Lee, Varieties of monoids with complex lattices of subvarieties, \textit{Bull. Lond. Math. Soc.} \textbf{52} (2020), 762--775

\bibitem{Gusev-Lee-Vernikov-22}
S.V. Gusev, E.W.H. Lee, and B.M. Vernikov, The lattice of varieties of monoids, \textit{Japan. J. Math.} \textbf{17} (2022), 117--183

\bibitem{Gusev-Vernikov-21}
S.V. Gusev and B.M. Vernikov, Two weaker variants of congruence permutability for monoid varieties, \textit{Semigroup Forum} \textbf{103} (2021), 106--152

\bibitem{Head-68}
T.J. Head, The varieties of commutative monoids, \textit{Nieuw Arch. Wiskunde. III Ser.} \textbf{16} (1968), 203--206 

\bibitem{Jackson-05}
M. Jackson, Finiteness properties of varieties and the restriction to finite algebras, \textit{Semigroup Forum} \textbf{70} (2005), 159--187

\bibitem{Jackson-15}
M. Jackson,  Infinite irredundant axiomatisability for a finite monoid. Manuscript; available at: \url{http://arxiv.org/abs/1511.05979}

\bibitem{Jackson-Lee-18}
M. Jackson and E.W.H. Lee, Monoid varieties with extreme properties, \textit{Trans. Amer. Math. Soc.} \textbf{370} (2018), 4785--4812

\bibitem{Jackson-Zhang-21}
M. Jackson and W.T. Zhang, From $A$ to $B$ to $Z$, \textit{Semigroup Forum} \textbf{103} (2021), 165--190

\bibitem{Kozhevnikov-12}
P.A. Kozhevnikov, On nonfinitely based varieties of groups of large prime exponent, \textit{Commun. Algebra} \textbf{40} (2012), 2628--2644

\bibitem{Lee-12}
E.W.H. Lee, Maximal Specht varieties of monoids, \textit{Mosc. Math. J.} \textbf{12} (2012), 787--802

\bibitem{Lee-13}
E.W.H. Lee, Almost Cross varieties of aperiodic monoids with central idempotents, \textit{Beitr\"age zur Algebra und Geometrie} \textbf{54} (2013), 121--129

\bibitem{Lee-14}
E.W.H. Lee, Inherently non-finitely generated varieties of aperiodic monoids with central idempotents, \textit{Zapiski Nauchnykh Seminarov POMI (Notes of Scientific Seminars of the St.~Petersburg Branch of the Math. Institute of the Russ. Acad. of Sci.)} \textbf{423} (2014), 166--182

\bibitem{Lee-23}
E.W.H. Lee, \textit{Advances in the Theory of Varieties of Semigroups}, Birkh\"{a}user/Springer, Cham, 2023

\bibitem{Perkins-69}
P. Perkins, Bases for equational theories of semigroups, \textit{J. Algebra} \textbf{11} (1969), 298--314

\bibitem{Sapir-91}
M.V. Sapir, On Cross semigroup varieties and related questions, \textit{Semigroup Forum} \textbf{42}  (1991), 345--364

\bibitem{Shevrin-Vernikov-Volkov-09}
L.N. Shevrin, B.M. Vernikov,  and M.V. Volkov, Lattices of semigroup varieties, \textit{Izv. Vyssh. Uchebn. Zaved. Matem.} (2009), no. 3, 3--36 (in Russian; Engl. translation: \textit{Russian Math. (Iz. VUZ)} \textbf{53} (2009), no. 3, 1--28)

\bibitem{Straubing-82}
H. Straubing, The variety generated by finite nilpotent monoids, \textit{Semigroup Forum} \textbf{24} (1982), 25--38

\bibitem{Volkov-90} 
M.V. Volkov, A general finite basis condition for system of semigroup identities, \textit{Semigroup Forum} \textbf{41} (1990), 181--191
\end{thebibliography}
\end{document}